\def\Section{\setcounter{equation}{0}\section}
\newtheorem{theorem}{Theorem}[section]
\newtheorem{lemma}[theorem]{Lemma}
\newtheorem{definition}[theorem]{Definition}
\newtheorem{proposition}[theorem]{Proposition}
\newtheorem{remark}[theorem]{Remark}
\newtheorem{assumption}[theorem]{Assumption}
\def\thetheorem{\thesection.\arabic{theorem}}
\def\thesection{\arabic{section}}
\def\theequation {\thesection.\arabic{equation}}
\def\beq{\begin{equation}\displaystyle}
\def\eeq{\end{equation}}
\def\bel{\begin{equation} \displaystyle \begin{array}{l} }
\def\eel{\end{array} \end{equation} }
\def\bell{\begin{equation} \displaystyle \begin{array}{ll}  }
\def\eell{\end{array} \end{equation} }
\def\bea{\begin{eqnarray}}
\def\eea{\end{eqnarray} }
\def\bean{\begin{eqnarray*}}
\def\eean{\end{eqnarray*} }
\newenvironment{proof}{\noindent{\bf Proof.~}}
{{\mbox{}\hfill {\small \fbox{}}\\}}
\renewcommand\appendix{\bigskip {\noindent \Large \bf Appendix}
  \setcounter{section}{0}%
  \setcounter{subsection}{0}%
\setcounter{equation}{0}%
\setcounter{theorem}{0}%
\def\thetheorem{A.\arabic{theorem}}
\def\theequation {A.\arabic{equation}}}
\def\thebibliography#1{\bigskip \noindent {\Large \bf References}\sloppy\nolinebreak
 \list
 {[\arabic{enumi}]}{\settowidth\labelwidth{[#1]}\leftmargin\labelwidth
 \advance\leftmargin\labelsep
 \usecounter{enumi}}
 \def\newblock{\hskip .11em plus .33em minus -.07em}
 \sloppy
 \sfcode`\.=1000\relax}
\newcounter{clegende}
\def\div{{\rm div \;}}
\def\NN{\mathbb{N}}
\def\RR{\mathbb{R}}
\def\LL{\mathbb{L}}
\def\DD{\mathbb{D}}
\def\ds{\displaystyle}
\def\bs{\bigskip}
\def\eps{\varepsilon}
\def\bar#1{{\overline #1}}
\def\fref#1{{\rm (\ref{#1})}}
\def\R2+{\RR ^2_+}
\def\supess{\mathop{\operator@font Sup\,ess}}
\def\bar#1{{\overline #1}}
\def\fref#1{{\rm (\ref{#1})}}
\def\eps{\varepsilon}
\def\calN{{\cal N}}
\def\calE{{\cal E}}
\def\calF{{\cal F}}
\def\calD{{\cal D}}
\def\Epsilon{\mbox{\Large $\epsilon$}}
\def\pa{\partial}
\def\bet{s}
\begin{document}

\bs
\begin{center}
{\Large \bf Numerical simulations of an energy-transport model
for partially quantized particles.}

\bs

{\large P. Pietra$^a$ and N. Vauchelet$^{b,c,}$\footnote{{\em Corresponding author.}}
} 

\bs

{\footnotesize $^a$ Istituto di Matematica Applicata e Tecnologie 
Informatiche ``Enrico Magenes'', CNR,\\ Via Ferrata 1, 27100 Pavia, Italy}

{\footnotesize $^b$ UPMC Univ Paris 06, UMR 7598, Laboratoire Jacques-Louis Lions, F-75005, Paris, France \\
$^c$  CNRS, UMR 7598, Laboratoire Jacques-Louis Lions, F-75005, Paris, France}

\smallskip
{\footnotesize {\em E-mail addresses:} {\tt paola.pietra@imati.cnr.it}
  ; {\tt vauchelet@ann.jussieu.fr}. }  \\

\end{center}

\medskip

\begin{abstract}
A coupled quantum-classical model describing the transport of electrons
confined in nanoscale semiconductor devices is considered.
Using the subband decomposition approach allows to separate the transport
directions from the confinement direction.
The motion of the gas in the transport direction is assumed to be classical.
Then a hierarchy of adiabatic quantum-classical model is obtained,
leading to subband SHE and
energy-transport models, with explicit expression of the diffusion
coefficients. The energy-transport-Schr\"odinger-Poisson model is
then used for the numerical simulation of the transport of the
electron gas in an ultra-scaled Double-Gate-MOSFET.
\end{abstract}

\bs

\noindent
{\bf Keywords :}
  Schr\"odinger equation, energy-transport system, subband model,
  nanotransistor, Gummel iterations, mixed finite elements.

\medskip
\noindent
{\bf AMS Subject Classification :}
65M60, 65Z05, 82D37, 82D80, 35J10, 76P05.

\bs

\Section{Introduction}

In nanoscale semiconductor devices, electrons might be extremely confined
in one or several directions refered to as the confining direction.
This leads to a partial quantization of the energy
which can be modelled by the subband decomposition method \cite{polizzi}.
This subband decomposition approach allows to separate the confinement
direction from the transport direction.
Thanks to the reduction of the dimension of the
transport problem, the computational gain is significant.
In the confined direction electrons behave like waves;
the system is at thermodynamical equilibrium and it is described by a
statistical mixture of eigenstates of a Schr\"odinger-Poisson system.
In the transport direction the transport can be of classical
\cite{ddsp,nbafmnegulescu} or quantum nature \cite{polizzi}.

Here, we are interested in deriving adiabatic
quantum--classical models accounting for thermal effects, aiming at
accurate and efficient numerical simulation of confined devices. 
In \cite{bourgade} several spherical harmonic expension
(SHE) models incorporating quantum effects are proposed. 
However, with their strategy the obtained models have a complicated 
non-local structure which is not suitable for numerical purposes.
Quantum energy--transport and quantum drift-diffusion models have been 
derived in \cite{QET} using the strategy of quantum moments, as well
as in \cite{jungel2}. These models 
involve a quantum chemical potential that depends on the density 
in a non--local way.
In this work we follow the complementary strategy proposed in
\cite{nbafmnegulescu} where the subband model is derived first, then
a diffusive approximation of the adiabatic Boltzmann equation is performed
to obain coupled quantum--fluid models (spherical harmonic expansion and 
energy--transport).
The subband energy--transport (ET) model in \cite{nbafmnegulescu}, directly 
derived from the Boltzmann equations (as in e.g. \cite{nbaldsg} for the 
classical case), is, however, not immediately suited 
for numerical simulations, since the diffusion coefficients are not given
in explicit form and, moreover, the energy relaxation term is not obtained.
Therefore, we propose in this work a suitable description
of the dominant collision mechanisms which allows to extend
the formal derivation of the spherical
harmonic expansion (SHE) model given in \cite{nbafmnegulescu}.
Then a new energy--transport (ET) model is formally derived as
diffusive limit from this SHE model \cite{nbadegond}.

Numerical discretization of classical ET equations
has already been studied in many papers~:
by using mixed finite elements schemes e.g. in \cite{etnumeriq,gadau,gadau2,holst,Lab,marrocco},
ENO schemes in \cite{jerome}, finite difference methods \cite{fournie,ringho}
and finite volume schemes in \cite{chainais}.
In \cite{etnumeriq,holst}, the authors propose a drift-diffusion reformulation
which allows to use an accurate Sharfetter-Gummel scheme with exponential 
fitting \cite{expfitting} and, moreover, to decouple the ET model.
However, in this quantum case, the involved form of the diffusion coefficients
does not allow for a decoupled drift-diffusion reformulation.
Then we will use a more traditional approach with mixed finite elements which can be directly applied
since the obtained ET model turns out to be in symmetric form.
Then, a Gummel type algorithm is used as outer iterations of the solution 
of the coupled energy-transport-Schr\"odinger-Poisson model, 
and the (non-linear) ET discrete system is solved 
by means of a Newton scheme.
Moreover, passing to the limit in the energy relaxation term, a subband 
drift-diffusion equation is recovered in the form of \cite{ddsp} 
with a more accurate description of the diffusion coefficients 
taking into account the collisional mechanisms (see also \cite{ddspnum} 
for numerical simulations).
This work is then an extension of \cite{ddspnum} 
to a more general collisional framework.

The outline of the paper is the following. In Section
\ref{deriv} we set the assumptions on the
collision mechanisms and we briefly present the formal derivation 
of the SHE model.
Then, we derive from this latter model the novel
subband ET model. Finally, a subband
drift-diffusion equation is obtained, as limit when the relaxation
time goes to infinity. Section \ref{simul} is devoted to the
numerical issues. Subsection \ref{simul1} presents the complete
stationary model, the mixed finite elements scheme is described in
Subsection \ref{simul2}, and the iterative approach is outlined in
Subsection \ref{simul3}. Numerical simulations of an ultra-scaled
Double-Gate MOSFET are presented in Subsection \ref{simul4}.

\Section{Formal derivation of adiabatic fluid-quantum models}\label{deriv}

\subsection{The quantum-kinetic framework}
We will assume in this work that the confinement direction is one
dimensional whereas the transport takes place in a two dimensional
domain. The domain is denoted $\Omega=\omega\times [0,\ell]$ with
$\omega\subset \RR^2$. The first two directions, called $x\in
\omega$, correspond to the classical description of the gas, whereas
in the third direction $z\in [0,\ell]$ quantum effects occur. The
quantum confinement of the electron gas is described thanks to the
eigen-elements of the 1D Schr\"odinger operator. They are denoted
$(\Epsilon_n,\chi_n)_{n\in \NN^*}$ and solve the eigenvalue
problem~: \beq \left\{
\begin{array}{ll}
\ds -\frac{\hbar^2}{2} \,\frac{d}{dz}\left(\frac{1}{m^*(z)}\frac{d}{dz}
  \chi_n\right)+ U \chi_n=\Epsilon_n\chi_n,  \\
\ds \chi_n(x,\cdot)\in H^1_0(0,\ell),\quad
\ds \int_0^\ell \chi_n\,\chi_{n'}\,dz=\delta_{nn'}\,.
\end{array}
\right.
\label{sdm}
\end{equation}
In this equation $\hbar$ is the reduced Planck constant, $m^*$
the effective mass.
It is known that the eigenvalues $\Epsilon_n$ in \fref{sdm} form an increasing
sequence tending to $+\infty$.
These functions depend on the potential energy defined by $U=-eV$,
where $e$ is the elementary charge and $V$ denotes the
self-consistent electrostatic potential, solution of the Poisson
equation \beq \div_{x,z}(\varepsilon_R(x,z) \nabla_{x,z}
V)=\frac{e}{\varepsilon_0}(N_e-N_D). \label{poisson}
\end{equation}
Here $\varepsilon_R(x,z)$ denotes the relative permittivity,
$\varepsilon_0$ the permittivity constant in vacuum, $N_D(x,z)$ is
the prescribed doping density and $N_e(t,x,z)$ is the electron density.
This density is
described by a sequence of distribution functions
$(f_n)_{n\in\NN^*}$ describing the repartition on each subband for
the classical direction $x\in\omega$ and the corresponding momentum
variable $k\in \RR^2$. It is written as
$$
N_e(t,x,z)=\sum_{n=1}^{+\infty} \left(\int_{\RR^2}
f_n(t,x,k)\,dk\right) |\chi_n|^2(t,x,z).
$$

The evolution of distribution functions is governed by classical
transport model in the $x$ direction parallel to the gas. The total
energy of the $n$th subband is defined by
\beq\label{energy}
\eps_n(t,x,k) = \frac{|k|^2\hbar^2}{2 m^*} +
\Epsilon_n(t,x). \eeq
Therefore the energy-band diagram of the
semiconductor crystal is spherically symmetric and strictly monotone
with respect to $|k|$. Then the Brillouin zone (which represents the
elementary cell of the dual lattice $L^*$) is equal to $\RR^2$.
Moreover, we point out that, in contrast with the classical counterpart, the
energy-band depends on space and time.
In a kinetic collisional framework, the distribution function $f_n$ of
the $n$th subband satisfies the rescaled Boltzmann transport
equation \cite{nbadegondgenieys,ddspnum}~: \beq\label{BTEintro}
\alpha^2\pa_t f_n^\alpha +\alpha(\nabla_k \eps_n\cdot\nabla_x
f_n^\alpha - \nabla_x \eps_n\cdot\nabla_k f_n^\alpha) =
Q_{ld}(f^\alpha)_n + \frac{\alpha^2}{\beta} Q_e(f^\alpha)_n, \eeq
where $Q_{ld}$ is the collision operator for the lattice defect
collisions and $Q_e$ is the collision operator for the elastic, non
linear electron-electron collisions, accounting for intra--band scattering
as well as for transitions between subbands. $\alpha$ and $\beta$ are
dimensionless parameters that satisfy $\alpha \ll \beta \ll 1$.

The main classes of lattice-defects that we shall consider
are impurities and phonons \cite{ashmer}~:
$$
Q_{ld}(f) = Q_{imp}(f) +Q_{ph}(f).
$$
The elastic character of the impurity scattering leads to
$$
Q_{imp}(f)_n(k):= \sum_{n'\in \NN^*} \int_{\RR^2} \Phi^{imp}_{n,n'}(k,k')
\delta (\eps_n(k) - \eps_{n'}(k')) (f_{n'}(t,x,k') -f_n(t,x,k)) \,dk',
$$
where $\delta$ is the Dirac measure and the
dependence on $t$, $x$ of $\eps$ and $\Phi^{imp}$ has been
omitted. The cross-section is assumed to be symmetric:
$\Phi^{imp}_{n,n'}(k,k')=\Phi^{imp}_{n',n}(k',k)$.

\noindent The electron-phonon collision operator is considered as
$$
\begin{array}{l}
\ds Q_{ph}(f)_n(k) =  \sum_{n'\in \NN^*} \int_{\RR^2} \Phi^{ph}_{n,n'}(k,k') \\[2mm]
\ds ([(N_{ph}+1) \delta(\eps_n(k) - \eps_{n'}(k')+\alpha^2\eps_{ph})
+ N_{ph} \delta(\eps_n(k) - \eps_{n'}(k')-\alpha^2\eps_{ph})]f_{n'}(k')(1-\eta f_n(k)) - \\[2mm]
\ds [(N_{ph}+1) \delta(\eps_{n'}(k') - \eps_n(k)+\alpha^2\eps_{ph})
+ N_{ph} \delta(\eps_{n'}(k') - \eps_n(k)-\alpha^2\eps_{ph})]f_n(k)(1-\eta f_{n'}(k')))dk',
\end{array}
$$
where again, $\Phi_{n,n'}^{ph}(k,k')=\Phi_{n',n}^{ph}(k',k)$, $\eps_{ph}$ is
the phonon energy,
$\eta\geq 0$ is a dimensionless distribution function scaling factor
and the terms $0\leq 1-\eta f_n\leq 1$ express the Pauli exclusion principle.
$N_{ph}$ is the phonon occupation number, given by the Bose-Einstein statistics
\beq\label{Nph}
N_{ph} = \left(e^{\alpha^2 \eps_{ph}/(k_B T_L)} -1 \right)^{-1},
\eeq
with $T_L$ the lattice temperature and $k_B$ the Boltzmann constant.
Formally expanding the phonon collision operator in power of $\alpha^2$, we get
$$
Q_{ph}^\alpha(f)=Q_{ph,0}(f)+\alpha^2 Q_{ph,1}^\alpha(f)
$$
where $Q_{ph,1}^\alpha$ is of order 1 when $\alpha$ goes to 0.

The electron-electron collision operator is given by \cite{reggiani}
\beq\label{Qe}
\begin{array}{l}
\ds Q_e(f)_n(k) = \\
\quad \ds \sum_{n',r,s} \int_{(\RR^2)^3} \Phi^e_{n,n',r,s}(k,k',k_1,k_1')
\delta(\eps_n + \eps_{n',1} - \eps'_r-\eps'_{s,1}) \delta(k+k_1-k'-k'_1)
     \\[2mm]
\quad \ds [f'_r f'_{s,1} (1-\eta f_n)(1-\eta f_{n',1})
-f_n f_{n',1} (1-\eta f'_r)(1-\eta f'_{s,1})]\,dk'dk_1dk'_1.
\end{array}
\eeq The notation $f_{n',1}$, $f'_r$ and $f'_{s,1}$ stands for
$f_{n'}(k_1)$, $f_r(k')$ and $f_s(k'_1)$, respectively.

We define then the elastic collision operator
\beq\label{Q0}
\begin{array}{ll}
\ds Q_0(f)_n & = \ds Q_{imp}(f)_n + Q_{ph,0}(f)_n \\[2mm]
& = \ds \sum_{n'\in \NN^*}\int_{\RR^2} \Phi^0_{n,n'} (k,k')
\delta(\eps_n(k)-\eps_{n'}(k'))(f_{n'}(k')-f_n(k)) \,dk',
\end{array}
\eeq
where $\Phi^0_{n,n'} = \Phi^{imp}_{n,n'} + (2 N_{ph}+1) \Phi^{ph}_{n,n'}$.
We set
\beq\label{Q1alpha}
Q_1^\alpha(f) = Q_{ph,1}^\alpha(f) + \frac{1}{\beta} Q_e(f).
\eeq

Then, the kinetic equation, starting point for the
diffusive limits, is written in the following form
 \beq\label{BTE}
\alpha^2\pa_t f_n^\alpha +\alpha(\nabla_k \eps_n\cdot\nabla_x
f_n^\alpha - \nabla_x \eps_n\cdot\nabla_k f_n^\alpha) =
Q_0(f^\alpha)_n + \alpha^2 Q_1^\alpha(f^\alpha)_n. \eeq

\subsection{Definitions and notations}

We first recall the coarea formula : for any $C^1$ function $g : B
\mapsto \RR$, and any test function $\psi\in C^0(B)$, we have~:
$$
\int_B \psi(k)\,dk = \int_\RR \left(
  \int_{g^{-1}(\eps)} \psi(k) \frac{dS_\eps(k)}{|\nabla g(k)|} \right) \,d\eps,
$$
where $dS_\eps(k)$ denotes the Euclidian surface element on the
manifold $g^{-1}(\eps)$. We denote $dN_\eps(k) = dS_\eps(k)/|\nabla
g(k)|$. Taking $g(k)=|k|^2\hbar^2/(2m^*)$, the set of
possible wave vectors of electrons belonging to the $n-th$ subband
and having total energy $\eps$ is given by
$S_{\eps-\epsilon_n}=\{k\in \RR^2$ s. t.
$|k|^2=2m^*\hbar^{-2}(\eps-\Epsilon_n)\}$ and
$dN_{\eps-\epsilon_n}=\frac{dS_{\eps-\epsilon_n}}{|k|\hbar^2/m^*}$
where $dS_{\eps-\epsilon_n}$ is the surface measure of the ball
$S_{\eps-\epsilon_n}$. The coarea formula leads to~:
\beq\label{coarea} \sum_{n\in \NN^*}\int_{\RR^2}
\psi_n(k)\,dk = \sum_{n\in\NN^*}\int_{\epsilon_n}^{+\infty} \left(
  \int_{S_{\eps-\epsilon_n}} \psi_n(k) dN_{\eps-\epsilon_n}(k) \right)\,d\eps,
\eeq
and
$$
\sum_{n\in \NN^*}\int_{\RR^2}
\psi_n(k)\delta(\Epsilon_n+\frac{|k|^2\hbar^2}{2m^*}-\eps)\,dk =
\sum_{n\in \NN^*} \int_{S_{\eps-\epsilon_n}} \psi_n(k) dN_{\eps-\epsilon_n}(k).
$$

\begin{definition}\label{definition}
We will use the following notations :
\begin{itemize}
\item The density of states is defined by :
$$
N(t,x,\eps):=\sum_{n\in \NN^*} \int_{S_{\eps-\epsilon_n}}
dN_{\eps-\epsilon_n}(k) = 2\pi\frac{m^*}{\hbar^2} \calN(t,x,\eps),
$$
where $\calN(t,x,\eps)=\max \{ n\in \NN^* \ / \ \Epsilon_n(t,x)\leq
\eps\}$, with the convention of $\calN(t,x,\eps)=0$ if $\eps < \Epsilon_1(t,x)$.
\item The Fermi-Dirac function is given by
$$
\calF_{\mu,T}(t,x,\eps) =
\left(\eta+\exp\left(\frac{\eps-\mu}{k_B T}\right)\right)^{-1}.
$$
\item We introduce the two Hilbert spaces
$$
\LL^2 := \{f=(f_n)_{n\in \NN^*},
\quad \sum_{n=1}^{+\infty} \int_{\RR^2} |f_n(k)|^2\,dk < +\infty \},
$$
endowed with the natural scalar product
$$
\langle f,g \rangle = \sum_{n\in \NN^*}\int_{\RR^2} f_n(k) g_n(k) \,dk ,
$$
and 
$$
\LL^2_\calF = \{ f\in L^2(\RR) \mbox{ s. t. } \int_\RR f^2(\eps)
\frac{d\eps}{\calF(\eps)(1-\eta \calF(\eps))} <+\infty \},
$$
endowed with the weighted scalar product defined
by
$$
\langle f,g \rangle_{\calF} = \int_\RR f(\eps) g(\eps)
\frac{d\eps}{\calF(\eps)(1-\eta \calF(\eps))}.
$$
\end{itemize}
\end{definition}

We will make the following assumption on the cross-section~:
\begin{assumption}\label{hypPhi0}
The coefficient $\Phi^0_{n,n'}$ satisfies, for $\lambda_0$ and
$\lambda_1$ two positive constants,`
$$
0<\lambda_0 < \Phi^0_{n,n'} N(t,x,\eps_n) < \lambda_1, \quad
\Phi^0_{n,n'}(k,k') = \Phi^0_{n',n}(k',k),
$$
where $N$ is the density of state defined above.
\end{assumption}

\subsection{First macroscopic scaling : the spherical harmonic expansion model}

For the sake of completness of this work, we present in this section
the limit $\alpha\to 0$ of the kinetic equation \fref{BTE}.
All calculations will be done formally and we refer the reader to
\cite{nbafmnegulescu} where the rigorous derivation is studied.
We consider the Hilbert expansion
$$
f^\alpha = f^0 + \alpha f^1 + \alpha^2 f^2 + \cdots
$$
By linearity of the operator $Q_0$ and by identifying the term of equal
powers of $\alpha$ in \fref{BTE}, we obtain
\begin{eqnarray}
\ds &&Q_0(f^0) = 0, \label{ordrealpha0}\\
\ds &&Q_0(f^1)_n= \frac{\hbar^2 k}{m^*}\cdot \nabla_x f_n^0
-\nabla_x\Epsilon_n\cdot \nabla_k f_n^0,
\label{ordrealpha1}\\
\ds &&Q_0(f^2)_n=\pa_t f_n^0 +\frac{\hbar^2 k}{m^*}\cdot\nabla_x
f_n^1 -\nabla_x\Epsilon_n\cdot \nabla_k f_n^1-Q_1^0(f^0)_n, \label{ordrealpha2}
\end{eqnarray}
where $Q_1^0$ is obtained by taking $\alpha = 0$ in the expression \fref{Q1alpha}.

We will then make use of the following properties
of the collision operator (see \cite{nbafmnegulescu}).
\begin{proposition}\label{propQ0}
Under Assumption \ref{hypPhi0}, the elastic collision operator $Q_0$
defined in \fref{Q0} satisfies the following properties~:
\begin{enumerate}
\item The linear operator $Q_0~: \LL^2\mapsto \LL^2$ is a bounded,
  symmetric, non-positive operator.
\item For any bounded function $\psi~: \RR\mapsto \RR$, we denote
  $\psi(\eps)_n(k)=\psi(\frac{|k|^2\hbar^2}{2m^*}+\Epsilon_n)$. Then,
$$
\forall\,f\in \LL^2,\ Q_0(\psi(\eps)f)=\psi(\eps) Q_0(f).
$$
\item The Kernel of $Q_0$ is the set
$$
\mbox{Ker } Q_0 = \{ f\in \LL^2, \quad \mbox{ s.t. } \exists\,
\psi:\RR\to \RR, \quad f = \psi(\eps) \}.
$$
\item The range $R(Q_0)$ is closed and coincide with the orthogonal
  of the kernel of $Q_0$ given by~:
$$
(\mbox{Ker } Q_0)^\bot = \{ f\in \LL^2, \quad \mbox{s.t.} \sum_{n\in
\NN^*} \int_{S_{\eps-\epsilon_n}} f_n(k) \,dN_{\eps-\epsilon_n}(k)
=0, \mbox{ for a.e. } \eps\geq \Epsilon_1 \}.
$$
\end{enumerate}
\end{proposition}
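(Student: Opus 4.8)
\medskip
\noindent\emph{Proof sketch.} The plan is to analyse $Q_0$ fibrewise in the total energy $\eps$. Using the coarea formula \fref{coarea}, integrating a function against $\delta(\eps_n(k)-\eps_{n'}(k'))$ in the $k'$ variable amounts to integrating over the finite union of spheres $\bigcup_{n'}S_{\eps-\Epsilon_{n'}}$ (with $\eps=\eps_n(k)$) carrying the measure $dN_{\eps-\Epsilon_{n'}}$, whose total mass is exactly the density of states $N(t,x,\eps)$. Accordingly one writes
\[
Q_0(f)_n(k)=[Pf]_n(k)-\nu_n(k)\,f_n(k),\qquad \eps=\eps_n(k),
\]
with $[Pf]_n(k)=\sum_{n'}\int_{S_{\eps-\Epsilon_{n'}}}\Phi^0_{n,n'}(k,k')\,f_{n'}(k')\,dN_{\eps-\Epsilon_{n'}}(k')$ and collision frequency $\nu_n(k)=\sum_{n'}\int_{S_{\eps-\Epsilon_{n'}}}\Phi^0_{n,n'}(k,k')\,dN_{\eps-\Epsilon_{n'}}(k')$. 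Since by Assumption \ref{hypPhi0} the kernel $\Phi^0_{n,n'}$ is pinched between $\lambda_0/N(\eps)$ and $\lambda_1/N(\eps)$ while each fibre has mass $N(\eps)$, one gets $\lambda_0\le\nu_n(k)\le\lambda_1$ and, by Schur's test applied on each fibre, $\|P\|_{\LL^2\to\LL^2}\le\lambda_1$; hence $\|Q_0\|\le 2\lambda_1$, which proves that $Q_0$ is bounded. For symmetry and non-positivity I would use the crossing symmetry $\Phi^0_{n,n'}(k,k')=\Phi^0_{n',n}(k',k)$ and that of $\delta$ to carry out the usual symmetrisation
\[
\langle Q_0f,g\rangle=-\frac12\sum_{n,n'}\int_{\RR^2}\int_{\RR^2}\Phi^0_{n,n'}(k,k')\,\delta(\eps_n(k)-\eps_{n'}(k'))\,(f_{n'}(k')-f_n(k))(g_{n'}(k')-g_n(k))\,dk\,dk',
\]
which is manifestly symmetric in $(f,g)$; choosing $g=f$ and using $\Phi^0_{n,n'}>0$ gives $\langle Q_0f,f\rangle\le 0$. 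Property~2 is then immediate: on the support of $\delta(\eps_n(k)-\eps_{n'}(k'))$ one has $\psi(\eps_{n'}(k'))=\psi(\eps_n(k))$, which therefore factors out of the $k'$-integral.

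For property~3, the inclusion $\{f=\psi(\eps)\}\subseteq\mathrm{Ker}\,Q_0$ follows from the same remark, since $f_{n'}(k')-f_n(k)$ vanishes on the support of the $\delta$. Conversely, $Q_0(f)=0$ forces, through the symmetrised identity with $g=f$ and the strict positivity of every $\Phi^0_{n,n'}$, that $f_{n'}(k')=f_n(k)$ whenever $\eps_{n'}(k')=\eps_n(k)$; since every energy level $\eps\ge\Epsilon_1$ is attained, this means that $f_n(k)$ depends on $(n,k)$ only through $\eps_n(k)$, i.e.\ $f=\psi(\eps)$ for a suitable $\psi$. For property~4 I would first identify $(\mathrm{Ker}\,Q_0)^\bot$: by the coarea formula $\langle f,\psi(\eps)\rangle=\int_\RR\psi(\eps)\big(\sum_n\int_{S_{\eps-\Epsilon_n}}f_n\,dN_{\eps-\Epsilon_n}\big)\,d\eps$, which vanishes for every bounded $\psi$ precisely when $\sum_n\int_{S_{\eps-\Epsilon_n}}f_n\,dN_{\eps-\Epsilon_n}=0$ for a.e.\ $\eps$, the set stated in the proposition. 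Because $Q_0$ is bounded and self-adjoint, $\overline{R(Q_0)}=(\mathrm{Ker}\,Q_0)^\bot$ holds automatically, so the only remaining point is that $R(Q_0)$ is \emph{closed}, and for this it suffices to prove the coercivity (spectral-gap) estimate $-\langle Q_0f,f\rangle\ge\lambda_0\|f\|^2$ for every $f\in(\mathrm{Ker}\,Q_0)^\bot$. This is the core of the argument: on each energy fibre, endowed with its measure $d\mu_\eps$ of total mass $N(\eps)$, a function with zero $\mu_\eps$-average satisfies the elementary variance identity $\int\int(f(k')-f(k))^2\,d\mu_\eps\,d\mu_\eps=2N(\eps)\int f^2\,d\mu_\eps$; inserting this into the symmetrised formula together with $\Phi^0\ge\lambda_0/N(\eps)$ yields exactly $-\langle Q_0f,f\rangle\ge\lambda_0\|f\|^2$. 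Then $-Q_0$ restricted to the Hilbert space $(\mathrm{Ker}\,Q_0)^\bot$ is bounded, self-adjoint and coercive, hence boundedly invertible onto it by Lax--Milgram, so $R(Q_0)=(\mathrm{Ker}\,Q_0)^\bot$, which is closed.

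I expect the Poincaré-type coercivity step behind property~4 to be the main obstacle: one has to verify that the constant in it is genuinely independent of $\eps$, which is precisely what the \emph{uniform lower} bound on $\Phi^0_{n,n'}N(t,x,\eps_n)$ in Assumption \ref{hypPhi0} provides — it is here, and not merely for boundedness, that the two-sided bound is needed. All the manipulations above are performed only formally, in particular the direct-integral decomposition of $\LL^2$ over $\eps$ and the handling of the Dirac masses, the rigorous justification being the one given in \cite{nbafmnegulescu}.
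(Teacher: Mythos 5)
Your sketch is correct and follows essentially the same route as the proof the paper relies on: the paper does not prove Proposition \ref{propQ0} itself but refers to \cite{nbafmnegulescu}, where the argument is precisely the gain--loss (fibrewise) decomposition, symmetrisation for symmetry and non-positivity, and the energy-shell Poincar\'e/coercivity estimate using the two-sided bound of Assumption \ref{hypPhi0} to obtain the spectral gap and hence the closed range. Your observation that the lower bound on $\Phi^0_{n,n'}N$ is needed only for the coercivity step, not for boundedness, matches the role it plays there.
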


From Proposition \ref{propQ0} and \fref{ordrealpha0},
we deduce that $f^0$ is an energy dependent function~:
$$
f^0_n(t,x,k) = F(t,x,\eps_n).
$$
Choosing $\psi:\RR\to\RR$ such that $k\psi(\eps)\in \LL^2$, we deduce from
Proposition \ref{propQ0} that there
exists a unique solution $\xi$ in (Ker $Q_0)^\bot$ such that
$$
-Q_0(\xi) = \frac{\hbar^2 k}{m^*} \psi(\eps).
$$
We can write $\xi=\Theta\cdot\psi(\eps)$ and from the second item of
Proposition \ref{propQ0}, $\Theta$ is independent of the choice of
the function $\psi$. Then, equation \fref{ordrealpha1} leads to
\beq\label{f1} f^1_n(t,x,k) = -\Theta_n(t,x,k)\cdot \nabla_x
F(t,x,\eps_n). \eeq Finally, the solvability condition of equation
\fref{ordrealpha2} is that the right hand side belongs to (Ker
$Q_0)^\bot$. This leads to \beq\label{step1she} \sum_{n\in \NN^*}
\int_{S_{\eps-\epsilon_n}} (\pa_t f_n^0 + \nabla_k\eps_n\cdot
\nabla_x f_n^1 - \nabla_x \Epsilon_n \cdot \nabla_k f^1_n -
Q_1^0(f^0)_n)\,dN_{\eps-\epsilon_n}(k) = 0, \mbox{
for a.e. } \eps\geq \Epsilon_1. \eeq Let us denote \beq\label{SeS1}
S_e(F) = \sum_{n\in \NN^*} \int_{S_{\eps-\epsilon_n}}
Q_e(F)_n\,dN_{\eps-\epsilon_n}(k), \quad \mbox{ and
} \quad S_1(F) = \sum_{n\in \NN^*} \int_{S_{\eps-\epsilon_n}}
Q_{ph,1}^0 (F)_n\,dN_{\eps-\epsilon_n}(k). \eeq
Multiplying \fref{step1she} by an energy-dependent test function
$\phi(\eps)$ and integrating with respect to the variable $\eps$, we
obtain for the first term~:
$$
\begin{array}{c}
\ds \int_{\epsilon_1}^\infty \sum_{n\in \NN^*}
\int_{S_{\eps-\epsilon_n}} \pa_t f_n^0 \,dN_{\eps-\epsilon_n}(k)
\phi(\eps) \,d\eps = \int_{\epsilon_1}^\infty
\sum_{n\in \NN^*} \int_{S_{\eps-\epsilon_n}} (\pa_t
F + \pa_\eps F \pa_t \Epsilon_n) \,dN_{\eps-\epsilon_n}(k)
\phi(\eps) \,d\eps   \\[2mm]
\ds = \int_{\epsilon_1}^\infty N \pa_t F \phi(\eps)\,d\eps +
\int_{\epsilon_1}^\infty \pa_\eps F \left(
\sum_{n\in \NN^*} \pa_t \Epsilon_n
\int_{S_{\eps-\epsilon_n}} dN_{\eps-\epsilon_n}(k)\right) \phi(\eps)
\,d\eps.
\end{array}
$$
Using the coarea formula \fref{coarea}, we deduce that
$$
\begin{array}{c}
\ds \int_{\epsilon_1}^\infty \left(\sum_{n\in \NN^*}
\int_{S_{\eps-\epsilon_n}} (\nabla_k\eps_n\cdot \nabla_x f_n^1-
\nabla_x\Epsilon_n\cdot\nabla_k f_n^1)
\,dN_{\eps-\epsilon_n}(k)\right)
\phi(\eps) \,d\eps = \\
\ds =\sum_{n\in \NN^*}\int_{\RR^2}
(\nabla_x\cdot(\frac{\hbar^2 k}{m^*} f_n^1)- \nabla_k\cdot (f_n^1
\nabla_x \Epsilon_n)) \phi(\eps_n) \,dk = \nabla_x
\cdot \left(\sum_{n\in \NN^*}\int_{\RR^2}
\frac{\hbar^2 k}{m^*} f_n^1
\phi(\eps_n)\,dk \right) =\\
\ds = -\int_{\epsilon_1}^\infty \nabla_x\cdot\left(
\sum_{n\in \NN^*} \int_{S_{\eps-\epsilon_n}}
\frac{\hbar^2 k}{m^*}\otimes\Theta_n \,dN_{\eps-\epsilon_n}(k) \cdot
\nabla_x F\right) \phi(\eps) \,d\eps,
\end{array}
$$
where the last identity is a consequence of \fref{f1}. We define the
diffusion matrix by \beq\label{D} D(t,x,\eps) := \sum_{n\in
\NN^*}\int_{S_{\eps-\epsilon_n}} \frac{\hbar^2 k}{m^*}
\otimes\Theta_n \,dN_{\eps-\epsilon_n}(k), \eeq and the current
density by \beq\label{J} J(t,x,\eps) = - D(t,x,\eps)\cdot \nabla_x
F(t,x,\eps). \eeq With these notations, we get that in the
distributional sense, equation \fref{step1she} is equivalent to the
spherical harmonic expansion (SHE) model \beq\label{SHE} N\pa_t F +
\nabla_x \cdot J - \kappa\pa_\eps F = \frac{1}{\beta} S_e(F) +
S_1(F), \eeq where $\kappa$ is given by \beq\label{kappa}
\kappa(t,x,\eps) = -2\pi\frac{m^*}{\hbar^2} \pa_t\left(\sum_{n\in
\NN^*}(\eps-\Epsilon_n)^+\right). \eeq The notation $u^+=\max
\{0,u\}$ denotes the positive part of $u$. We recall moreover a
property of the diffusion matrix $D$ stated in Lemma 2.8 of
\cite{nbafmnegulescu}. We point out that the effect
of the confinement is reflected in the special form of the
coefficients of \fref{SHE}, which involve the subband energies.

\begin{lemma}\label{Dpos}
The diffusion matrix $D(t,x,\eps)$ define in \fref{D} is a symmetric
and nonnegative $2\times 2$ matrix.
\end{lemma}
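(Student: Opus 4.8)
The plan is to exhibit $D(t,x,\eps)$ as a Gram-type matrix built from the solution operator $\Theta$ of the auxiliary equation $-Q_0(\xi)=\frac{\hbar^2k}{m^*}\psi(\eps)$, and to use the symmetry and non-positivity of $Q_0$ (items 1 and 2 of Proposition \ref{propQ0}) to read off both claimed properties. For a fixed energy $\eps$, I would think of the components $\Theta_n^{(p)}$, $p=1,2$, as defined by $-Q_0(\Theta^{(p)})_n(k) = \frac{\hbar^2 k_p}{m^*}$ restricted to the energy shell $\{\eps_n = \eps\}$; more precisely, for each unit vector $e_p$ one picks $\psi(\eps)\equiv 1$ near the value $\eps$ so that $k\psi(\eps)\in\LL^2$ and solves for $\xi^{(p)}=\Theta^{(p)}\cdot e_p\in(\mathrm{Ker}\,Q_0)^\bot$. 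Item 2 of Proposition \ref{propQ0} guarantees $\Theta$ does not depend on the cutoff $\psi$, so these quantities are well defined on each shell.

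For symmetry: the $(p,q)$ entry of $D$ is
$$
D_{pq} = \sum_{n\in\NN^*}\int_{S_{\eps-\epsilon_n}} \frac{\hbar^2 k_p}{m^*}\,\Theta_n^{(q)}\,dN_{\eps-\epsilon_n}(k).
$$
Using the coarea identity recalled just before Definition \ref{definition}, the shell integral of a product against $dN_{\eps-\epsilon_n}$ is exactly the $\LL^2$ pairing $\langle\cdot,\cdot\rangle$ evaluated after fixing the energy, so that $D_{pq}$ can be rewritten as $\langle -Q_0(\Theta^{(p)}),\Theta^{(q)}\rangle$ (up to the energy localization). Since $\frac{\hbar^2 k_p}{m^*}= -Q_0(\Theta^{(p)})_n(k)$ on the shell, we get $D_{pq}=\langle -Q_0(\Theta^{(p)}),\Theta^{(q)}\rangle = \langle \Theta^{(p)}, -Q_0(\Theta^{(q)})\rangle = D_{qp}$, the middle equality being the self-adjointness in item 1 of Proposition \ref{propQ0}. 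Non-negativity is then immediate from the same rewriting: for any $\zeta=(\zeta_1,\zeta_2)\in\RR^2$, $\zeta^T D\zeta = \langle -Q_0(\Theta\cdot\zeta),\Theta\cdot\zeta\rangle = -\langle Q_0(v),v\rangle \ge 0$ with $v=\Theta\cdot\zeta$, because $Q_0$ is non-positive (item 1).

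The main obstacle is purely bookkeeping: making rigorous the passage between the shell integrals $\int_{S_{\eps-\epsilon_n}}\cdots\,dN_{\eps-\epsilon_n}$ and the global $\LL^2$ scalar product $\langle\cdot,\cdot\rangle$, since $\Theta$ is genuinely a function of $k$ on all of $\RR^2$ while $D$ only sees its restriction to one shell. The clean way is to work with a localizing test function $\psi$ concentrated near $\eps$, apply item 2 of Proposition \ref{propQ0} (which commutes $Q_0$ with multiplication by energy-dependent functions), then use the coarea formula to disintegrate $\langle Q_0(\psi(\eps)\Theta\cdot\zeta),\psi(\eps)\Theta\cdot\zeta\rangle$ over the energy variable, obtaining an integrand in $\eps$ whose value at $\eps$ is exactly $\zeta^T D(t,x,\eps)\zeta$ times $|\psi(\eps)|^2$; since this is non-negative for every admissible $\psi$, one concludes $\zeta^T D\zeta\ge 0$ for a.e. $\eps$, and similarly for the symmetry of the off-diagonal entries. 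Everything else — that $D$ is $2\times2$ and real — is clear from its definition \fref{D}. Since this is exactly Lemma 2.8 of \cite{nbafmnegulescu}, I would at this level of detail simply recall that reference and sketch the Gram-matrix argument above.
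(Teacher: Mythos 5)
Your argument is correct: after localizing in energy via item 2 of Proposition \ref{propQ0} and disintegrating with the coarea formula, the rewriting $\int \zeta^{T} D(\eps)\zeta\,|\psi(\eps)|^{2}d\eps=\langle -Q_0(\psi\,\Theta\cdot\zeta),\psi\,\Theta\cdot\zeta\rangle$ gives nonnegativity from the non-positivity of $Q_0$ and, with the analogous identity for the entries $D_{pq}$, symmetry from the self-adjointness of $Q_0$. This matches the paper's treatment, which gives no proof and simply invokes Lemma 2.8 of \cite{nbafmnegulescu} -- precisely the fallback you indicate -- so your Gram-matrix sketch is consistent with (indeed slightly more explicit than) what the paper does.
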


\begin{remark}
We end this section with a particular choice of the
cross--section $\Phi^0_{n,n'}$, which allows to compute explicitly
the diffusion matrix. If the cross-section is an energy-dependent
function of the form
$$
\Phi^0_{n,n'}(t,x,k,k') = \Phi^0(t,x,\eps_n),
$$
then, after a straightforward computation, we have that
$$
\Theta_n(t,x,k) = \frac{1}{\Phi^0(t,x,\eps_n)N(t,x,\eps_n)}\, \nabla_k \eps_n.
$$
Therefore, the diffusion matrix defined in \fref{D} has the expression
\beq\label{Dexplicit}
D(t,x,\eps) = \frac{\pi\hbar^2/m^*}{\Phi^0(t,x,\eps)N(t,x,\eps)}
\sum_{n\in \NN^*} (\eps - \Epsilon_n)^+ \,Id.
\eeq
\end{remark}

\subsection{Second macroscopic scaling :
the energy-transport model}

We start from the SHE model \fref{SHE} and we assume that the
electron-electron collision operator is dominant with respect to the
second order correction of the phonon collision operator and
therefore $\beta\ll 1$, in order to obtain an ET model.
Passing through the SHE model, instead of starting directly from the
Boltzmann equation allows to get an explicit expression of the
coefficients, which is needed for numerical purpose. Moreover, the
considered dominant scattering mechanisms provide an energy
relaxation term in the macroscopic limiting model.

The formal limit $\beta\to 0$ in \fref{SHE} is again performed by
means of a Hilbert expansion
$$
F = F^0 + \beta F^1 + \cdots
$$
Identifying equal powers of $\beta$ implies \beq\label{ordre1}
S_e(F_0)= 0, \eeq \beq\label{ordre2} N \pa_t F^0 + \nabla_x J^0 -
\kappa \pa_\eps F^0 - S_1(F^0) = D_{F^0} S_e (F^1), \eeq where
$D_{F^0} S_e$ denotes the Fr\'echet derivative of $S_e$ at $F^0$.

We summarize below some useful properties of the collision operator
$S_e$ defined in \fref{SeS1} and of its Fr\'echet derivative.
\begin{proposition}\label{propSe}
Under micro-reversibility assumptions on the cross-section $\Phi^e$, 
the operator $S_e$ satisfies the following properties~:

(i) For all $f,g\in L^2(\RR)$, we have :
\begin{eqnarray*}
\ds &&\int_{\RR} S_e(f)(\eps) g(\eps) d\eps =\\[2mm]
&=& -\frac 14 \sum_{n,n',r,s} \int_{(\RR^2)^4} \Phi^e_{n,n',r,s}
\delta_\eps \delta_k
[f(\eps_r(k'))f(\eps_s(k'_1)) (1-\eta f(\eps_n(k)))(1-\eta f(\eps_{n'}(k_1)))  \\[2mm]
\ds  &-& f(\eps_n(k)) f(\eps_{n'}(k_1))(1-\eta f(\eps_r(k')))(1-\eta
f(\eps_s(k'_1))) [g'_r + g'_{s,1} -g_n-g_{n',1}]\, dkdk_1dk'dk'_1.
\end{eqnarray*}
(ii) kernel :
$$
\mbox{ Ker } S_e = \{ f\in L^2(\RR) ; \exists\, \mu(t,x), T(t,x)
\mbox{ s. t. } f(t,x,\eps)= \calF_{\mu,T} (t,x,\eps)\},
$$
where $\calF_{\mu,T}$ is the so-called Fermi-Dirac distribution function
(see Definition \ref{definition}).
\end{proposition}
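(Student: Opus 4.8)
The plan is to derive (i) from the coarea formula followed by a symmetrization that exploits the invariances of $\Phi^e$, and (ii) from an $H$--theorem argument reduced to a functional equation. For (i), observe that when $S_e$ acts on an energy--dependent $f\in L^2(\RR)$, both $f$ and the test function $g$ depend on $k$ only through $\eps_n(k)$, which equals $\eps$ on the shell $S_{\eps-\epsilon_n}$; applying the coarea formula \fref{coarea} to $\psi_n(k)=Q_e(f)_n(k)\,g(\eps_n(k))$ then gives
$$
\int_\RR S_e(f)(\eps)\,g(\eps)\,d\eps=\sum_{n\in\NN^*}\int_{\RR^2}Q_e(f)_n(k)\,g(\eps_n(k))\,dk .
$$
Substituting the definition \fref{Qe} of $Q_e$ turns the right--hand side into the integral over $(\RR^2)^4$ of $\Phi^e_{n,n',r,s}\,\delta_\eps\,\delta_k$ against the gain--loss bracket, weighted by $g(\eps_n(k))$. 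One then symmetrizes: under the exchanges $(n,k)\leftrightarrow(n',k_1)$ and $(r,k')\leftrightarrow(s,k'_1)$ the factors $\Phi^e$, $\delta_\eps$, $\delta_k$ and the bracket are all left invariant, whereas under the exchange of the incoming pair $(n,k;n',k_1)$ with the outgoing pair $(r,k';s,k'_1)$ the product $\Phi^e\,\delta_\eps\,\delta_k$ is invariant but the bracket changes sign. Averaging the weight $g(\eps_n(k))$ over these four operations replaces it by $\frac14(g_n+g_{n',1}-g'_r-g'_{s,1})$, and the stated identity follows; the only care needed is the bookkeeping of which symmetry preserves and which reverses the sign of the bracket.

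For (ii), the inclusion $\calF_{\mu,T}\in\mathrm{Ker}\,S_e$ is immediate: since $\frac{\calF_{\mu,T}}{1-\eta\calF_{\mu,T}}(\eps)=e^{(\mu-\eps)/(k_BT)}$, energy conservation in $\delta(\eps_n+\eps_{n',1}-\eps'_r-\eps'_{s,1})$ yields the detailed balance $f'_rf'_{s,1}(1-\eta f_n)(1-\eta f_{n',1})=f_nf_{n',1}(1-\eta f'_r)(1-\eta f'_{s,1})$, so the bracket in \fref{Qe} vanishes pointwise, $Q_e(\calF_{\mu,T})=0$, hence $S_e(\calF_{\mu,T})=0$. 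For the converse, assume $S_e(f)=0$ with $f>0$ and $1-\eta f>0$ on the relevant energy range, set $h=\log\frac{f}{1-\eta f}$, and test the identity of (i) with $g=h$. Factoring out the nonnegative quantity $\Pi=(1-\eta f_n)(1-\eta f_{n',1})(1-\eta f'_r)(1-\eta f'_{s,1})$, the bracket equals $\Pi(e^{h'_r+h'_{s,1}}-e^{h_n+h_{n',1}})$, so that
$$
0=\int_\RR S_e(f)\,h\,d\eps=-\frac14\sum_{n,n',r,s}\int_{(\RR^2)^4}\Phi^e_{n,n',r,s}\,\delta_\eps\,\delta_k\,\Pi\,\big(e^{h'_r+h'_{s,1}}-e^{h_n+h_{n',1}}\big)\big(h'_r+h'_{s,1}-h_n-h_{n',1}\big).
$$
Under the natural positivity assumption $\Phi^e>0$, together with $\Pi\ge0$ and $(e^a-e^b)(a-b)\ge0$ (equality only for $a=b$), the integrand is $\ge0$ and therefore vanishes a.e.; hence $h(\eps_r(k'))+h(\eps_s(k'_1))=h(\eps_n(k))+h(\eps_{n'}(k_1))$ whenever $k+k_1=k'+k'_1$ and $\eps_n(k)+\eps_{n'}(k_1)=\eps_r(k')+\eps_s(k'_1)$. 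Specializing to $n=n'=r=s=1$ and using that in two dimensions any energies $\eps_1+\eps_2=\eps_3+\eps_4\ge2\Epsilon_1$ are realized by wave vectors with $k+k_1=k'+k'_1$, one gets $h(\eps_1)+h(\eps_2)=2h(\tfrac12(\eps_1+\eps_2))$; the measurable solutions of Jensen's equation being affine, $h(\eps)=a+b\eps$, i.e. $f=\calF_{\mu,T}$ with $T=-1/(k_Bb)$ and $\mu=-a/b$.

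The main obstacle is this last implication. First, $h=\log\frac{f}{1-\eta f}$ need not belong to $L^2(\RR)$, so its use as a test function in (i) must be justified by truncation together with a monotone/dominated convergence argument. Second, one has to verify that the elastic--collision manifold — even restricted to a single subband — is rich enough to turn the ``collision invariant'' relation $h'_r+h'_{s,1}=h_n+h_{n',1}$ into Jensen's functional equation, after which the measurable Cauchy/Jensen theory applies; one should also note that $S_e$ constrains $f$ only on the physical range $\eps\ge\Epsilon_1$, so the characterization of the kernel is understood there. Everything else — the change of variables in (i) and the detailed-balance computation for $\supseteq$ — is routine.
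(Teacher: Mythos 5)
Your argument is correct, and its first step is exactly the paper's: the identity $\int_\RR S_e(f)(\eps)g(\eps)\,d\eps=\sum_{n\in\NN^*}\int_{\RR^2}Q_e(f)_n\,g(\eps_n)\,dk$ obtained from the coarea formula \fref{coarea} is precisely the displayed computation with which the paper justifies the proposition. The difference is in what comes next: the paper stops there and invokes Propositions 3.16, 3.17 and 3.19 of \cite{nbafmnegulescu}, whereas you reprove those ingredients from scratch — the fourfold symmetrization under the micro-reversibility of $\Phi^e$ for the weak form (i) (your sign bookkeeping is right: the in/out exchange flips the gain--loss bracket, giving the $-\frac14$ and the factor $g'_r+g'_{s,1}-g_n-g_{n',1}$), detailed balance via $\calF_{\mu,T}/(1-\eta\calF_{\mu,T})=e^{(\mu-\eps)/(k_BT)}$ for the inclusion of the Fermi--Dirac functions in the kernel, and the H-theorem with $g=\log\frac{f}{1-\eta f}$ plus the richness of the two-dimensional elastic collision manifold (for a single subband, any pair of incoming energies admits an outgoing pair of equal energies, yielding Jensen's equation and hence $h$ affine) for the converse. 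This buys a self-contained proof at the cost of extra hypotheses the paper leaves implicit in the citation (positivity of $\Phi^e$, $0<f$ and $1-\eta f>0$, integrability of the logarithmic test function, kernel characterized only for $\eps\geq\Epsilon_1$), all of which you correctly flag; your check that the momentum constraint does not obstruct the functional-equation step is the one genuinely delicate point, and your geometric argument for it is sound.
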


\begin{proposition}\label{propDSe}
The linear operator $D_\calF S_e$ satisfies

(i) $D_\calF S_e$ is bounded, symmetric, non-positive on $\LL^2_{\calF}$.

(ii) The kernel of $D_\calF S_e$ is given by
$$
\mbox{ Ker } (D_\calF S_e) = Span \{\calF(1-\eta\calF), \calF(1-\eta\calF)\eps\}
$$

(iii) The range of $D_\calF S_e$ is closed and we have
$$
R(D_\calF S_e) = \mbox{ Ker }(D_\calF S_e) ^\bot =
\left\{ f\in \LL^2_\calF \quad / \quad \int_\RR f(\eps)
\left(\begin{array}{c} 1 \\ \eps \end{array}\right)
\,d\eps = 0 \right\}.
$$
\end{proposition}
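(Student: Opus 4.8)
The plan is to reduce the whole proposition to one explicit quadratic-form identity obtained by linearizing Proposition \ref{propSe}(i) at a Fermi--Dirac state $\calF=\calF_{\mu,T}$. Substitute $f=\calF+h$ into the bracket
$$
f'_rf'_{s,1}(1-\eta f_n)(1-\eta f_{n',1})-f_nf_{n',1}(1-\eta f'_r)(1-\eta f'_{s,1})
$$
that sits inside the integral defining $\langle S_e(f),g\rangle$. The key observation is that this bracket \emph{vanishes} at $f=\calF$ on the energy shell $\eps_n+\eps_{n',1}=\eps'_r+\eps'_{s,1}$ carried by $\delta_\eps$, because $\calF_{\mu,T}$ satisfies there the detailed-balance relation $\calF_n\calF_{n',1}(1-\eta\calF'_r)(1-\eta\calF'_{s,1})=\calF'_r\calF'_{s,1}(1-\eta\calF_n)(1-\eta\calF_{n',1})$; hence only the term linear in $h$ survives the differentiation. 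Carrying out that differentiation and using the elementary identity $\frac1{\calF}+\frac{\eta}{1-\eta\calF}=\frac1{\calF(1-\eta\calF)}$, one obtains, with $\hat h:=h/(\calF(1-\eta\calF))$, $\hat g:=g/(\calF(1-\eta\calF))$ and $M_\calF$ the common on-shell value of the two products above,
$$
\langle D_\calF S_e(h),g\rangle_\calF=-\frac14\sum_{n,n',r,s}\int_{(\RR^2)^4}\Phi^e_{n,n',r,s}\,\delta_\eps\,\delta_k\,M_\calF\,\bigl[\hat h'_r+\hat h'_{s,1}-\hat h_n-\hat h_{n',1}\bigr]\bigl[\hat g'_r+\hat g'_{s,1}-\hat g_n-\hat g_{n',1}\bigr]\,dk\,dk_1\,dk'\,dk'_1 .
$$
Everything in the proposition then follows by inspecting this formula.

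For (i): the right-hand side is manifestly symmetric in $(h,g)$; taking $g=h$ it is $\le 0$ since $\Phi^e\ge 0$ and $M_\calF\ge 0$; and it defines a bounded bilinear form on $\LL^2_\calF$, hence (by Riesz representation) a bounded self-adjoint non-positive operator, as soon as the integrated cross-section $\sum_{n',r,s}\int\Phi^e\,\delta_\eps\,\delta_k\,M_\calF$ is bounded above — the natural counterpart for $\Phi^e$ of Assumption \ref{hypPhi0}. For (ii): a bounded symmetric non-positive operator annihilates exactly those $h$ for which its quadratic form vanishes, so $h\in\mbox{Ker}(D_\calF S_e)$ iff $\hat h'_r+\hat h'_{s,1}-\hat h_n-\hat h_{n',1}=0$ for a.e. collision configuration with energy and momentum conserved. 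Since $\hat h$ is a function of energy only, and since every quadruple of (admissible, i.e. above the lowest subband energy) energies $(a,b,c,d)$ with $a+b=c+d$ is realized by an admissible configuration (take all four subband indices equal to $1$ and adjust moduli and directions of the momenta; momentum conservation is then solvable because $|k|^2+|k_1|^2=|k'|^2+|k'_1|^2$ whenever $a+b=c+d$), this is the Cauchy functional equation $\hat h(a)+\hat h(b)=\hat h(c)+\hat h(d)$, whose measurable solutions are affine: $\hat h(\eps)=a_0+a_1\eps$, i.e. $h\in\mbox{Span}\{\calF(1-\eta\calF),\calF(1-\eta\calF)\eps\}$.

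For (iii): self-adjointness of $D_\calF S_e$ on $\LL^2_\calF$ gives $\overline{R(D_\calF S_e)}=\mbox{Ker}(D_\calF S_e)^\bot$, and the concrete form of that orthogonal is read off from (ii) together with Definition \ref{definition}: since $\langle f,\calF(1-\eta\calF)\rangle_\calF=\int_\RR f\,d\eps$ and $\langle f,\calF(1-\eta\calF)\eps\rangle_\calF=\int_\RR f\,\eps\,d\eps$, one gets $\mbox{Ker}(D_\calF S_e)^\bot=\{f\in\LL^2_\calF:\ \int_\RR f\,d\eps=\int_\RR f\,\eps\,d\eps=0\}$. The genuinely nontrivial point, and the main obstacle, is the closedness of the range; it is equivalent to a spectral-gap (Poincar\'e-type) estimate $-\langle D_\calF S_e(h),h\rangle_\calF\ge C\|h\|_\calF^2$ for $h\in\mbox{Ker}(D_\calF S_e)^\bot$. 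I would establish it by decomposing $D_\calF S_e$, in the variable $\hat h$, as a multiplication operator $-\nu(\eps)\hat h$ produced by the loss (diagonal) part plus an integral operator produced by the gain (off-diagonal) part: the cross-section bounds force $\nu\ge\nu_0>0$, and if the gain kernel is Hilbert--Schmidt the integral part is compact, so $D_\calF S_e$ is a compact perturbation of a boundedly invertible operator, hence Fredholm of index $0$, hence has closed range, necessarily equal to $\mbox{Ker}(D_\calF S_e)^\bot$.
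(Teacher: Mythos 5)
Your proposal is essentially correct, but it follows a genuinely different route from the paper. The paper does not reprove these properties at all: its entire argument consists of the coarea-formula identity $\int_\RR S_e(f)(\eps)g(\eps)\,d\eps=\sum_{n\in\NN^*}\int_{\RR^2}Q_e(f)_n\,g(\eps_n)\,dk$, which transfers the weak formulation, the kernel and the range statements for the linearized electron--electron operator from the $k$-space setting of \cite{nbafmnegulescu} (Propositions 3.16, 3.17 and 3.19 there) to the energy variable; all functional-analytic content is delegated to that reference. You instead reconstruct that content directly: linearizing the weak form of Proposition \ref{propSe}(i) at $\calF_{\mu,T}$ via detailed balance, obtaining the symmetric non-positive quadratic form in $\hat h=h/(\calF(1-\eta\calF))$, solving the collision-invariant (Cauchy) equation for the kernel, and invoking a Fredholm/spectral-gap argument for the closed range. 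Your computation of the linearized bracket $M_\calF[\hat h'_r+\hat h'_{s,1}-\hat h_n-\hat h_{n',1}]$ is correct, the identification of $\mbox{Ker}(D_\calF S_e)^\bot$ through the weighted scalar product is exactly right, and your realization of arbitrary admissible energy quadruples with $a+b=c+d$ by intraband collisions does work in 2D (the ranges of $|k+k_1|$ and $|k'+k'_1|$ both contain $\sqrt{|k|^2+|k_1|^2}$, so momentum conservation can be met), though it tacitly requires $\Phi^e>0$ on those configurations, which should be added to the unstated micro-reversibility hypotheses. What your route buys is a self-contained derivation that makes transparent why the space $\LL^2_\calF$ and the kernel elements $\calF(1-\eta\calF)$, $\calF(1-\eta\calF)\eps$ appear; what the paper's route buys is brevity and inherited hypotheses. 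The one point where your sketch falls short of a proof is precisely the closedness of the range: the loss/gain decomposition with a lower bound $\nu\ge\nu_0>0$ and a compact (Hilbert--Schmidt) gain part is stated conditionally and never verified, and establishing that compactness or an equivalent coercivity estimate is exactly the nontrivial content of Proposition 3.19 of \cite{nbafmnegulescu}; as written, part (iii) is a credible plan rather than a complete argument, which is acceptable at the formal level of this paper but should be acknowledged as such.
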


These properties are an easy consequence of Proposition 3.16,
Proposition 3.17 and Proposition 3.19 of \cite{nbafmnegulescu}, using the fact that :
$$
\begin{array}{ll}
\ds \int_{\RR} S_e(f)(\eps) g(\eps) \,d\eps  & = \ds
\int_{\epsilon_1}^{+\infty} \sum_{n\in \NN^*}
\int_{S_{\eps-\epsilon_n}}
Q_e(f)_n(\eps)g(\eps) \,dN_{\eps-\epsilon_n}(k) d\eps \\[2mm]
& \ds = \sum_{n\in \NN^*}
\int_{\epsilon_n}^{+\infty} \int_{S_{\eps-\epsilon_n}}
Q_e(f)_n(\eps)g(\eps) \,dN_{\eps-\epsilon_n}(k) d\eps \\[2mm]
& \ds = \sum_{n\in\NN^*} \int_{\RR^2} Q_e(f)_n(\eps_n)g(\eps_n) \, dk,
\end{array}
$$
where we use the coarea formula \fref{coarea} for the last identity.

\medskip

\noindent{\bf Formal derivation of energy-transport model.} 
Let us come back to the formal limit $\beta\to 0$ in \eqref{SHE}.
Thanks to Proposition \ref{propSe}, equation \fref{ordre1} implies
that there exist $\mu(t,x)$ and $T(t,x)$ such that 
\beq\label{F0}
F^0(t,x,\eps) = \calF_{\mu,T} (t,x,\eps) \eeq 
From Proposition
\ref{propDSe} we deduce that equation \fref{ordre2} admits a
solution iff \beq\label{solvability} \int_{\RR} (N \pa_t F^0 +
\nabla_x\cdot J^0 - \kappa \pa_\eps F^0- S_1(F^0))
\left(\begin{array}{c} 1 \\ \eps \end{array}\right) \,d\eps = 0.
\eeq For the first term, the definition of the density of states $N$
(see Definition \ref{definition}) implies that
$N(t,x,\eps)=2\pi\frac{m^*}{\hbar^2} n$ if $\eps\in\,
[\Epsilon_n,\Epsilon_{n+1})$ and vanishes for $\eps<\Epsilon_1$.
Then
$$
\begin{array}{ll}
\ds \int_\RR N \pa_t F^0 \left(\begin{array}{c} 1 \\ \eps
\end{array} \right)  \,d\eps & \ds = \sum_{n\in \NN^*}
\int_{\epsilon_n}^{\epsilon_{n+1}} 2\pi\frac{m^*}{\hbar^2} n\pa_t
F_0
\left(\begin{array}{c} 1 \\ \eps \end{array}\right) \,d\eps  \\
& \ds = \pa_t \left(\int_\RR N  F^0 \left(\begin{array}{c} 1 \\ \eps \end{array}
\right)  \,d\eps \right) - 2\pi\frac{m^*}{\hbar^2} \sum_{n\in \NN^*}
\pa_t\Epsilon_n F^0(\Epsilon_n)
\left(\begin{array}{c} 1 \\ \Epsilon_n \end{array} \right).
\end{array}
$$
Using the expression of the current \fref{J}, we can rewrite the
second term of \fref{solvability}~:
$$
\int_\RR \nabla_x\cdot J^0 \left(\begin{array}{c} 1 \\
\eps \end{array}\right)\,d\eps
= - \nabla_x \cdot\left[\int_\RR D(t,x,\eps) \cdot \nabla_x F^0(t,x,\eps)
\left(\begin{array}{c} 1 \\ \eps \end{array}\right) \,d\eps \right].
$$
From \fref{kappa}, we deduce
$$
\begin{array}{ll}
\ds \int_\RR \kappa \pa_\eps F^0 & \left(\begin{array}{c} 1 \\
 \eps \end{array}\right) \,d\eps   \ds= \sum_{n\in \NN^*}
 \int_{\epsilon_n}^\infty
2\pi \frac{m^*}{\hbar^2}\pa_t\Epsilon_n \, \pa_\eps F^0
\left(\begin{array}{c} 1 \\ \eps \end{array}\right) \,d\eps \\[2mm]
&\ds = -\sum_{n\in \NN^*} 2\pi
\frac{m^*}{\hbar^2}\pa_t\Epsilon_n \, F^0(\Epsilon_n)
\left(\begin{array}{c} 1 \\ \Epsilon_n
\end{array}\right) - \sum_{n\in \NN^*} 2\pi \frac{m^*}{\hbar^2}\pa_t\Epsilon_n
\int_{\epsilon_n}^\infty F^0(\eps) \left(\begin{array}{c} 0 \\ 1
\end{array}\right)\,d\eps,
\end{array}
$$
where we use an integration by part for the last identity.
Finally, the solvability condition \fref{solvability} writes in
the following form~:
\beq\label{EnTr}
\begin{array}{c}
\ds \pa_t \left(\int_\RR N  F^0 \left(\begin{array}{c} 1 \\
\eps \end{array}\right)  \,d\eps \right)+ \sum_{n\in
\NN^*} 2\pi \frac{m^*}{\hbar^2} \pa_t\Epsilon_n
\int_{\epsilon_n}^\infty
F^0(\eps) \left(\begin{array}{c} 0 \\ 1 \end{array}\right)\,d\eps \\[2mm]
\ds -\nabla_x\cdot\left[\int_\RR D(t,x,\eps) \cdot \nabla_x F^0(t,x,\eps)
\left(\begin{array}{c} 1 \\ \eps \end{array}\right) \,d\eps \right]
= \int_\RR S_1(F^0) \left(\begin{array}{c} 1 \\ \eps \end{array}\right)\,d\eps.
\end{array}
\eeq

Let us denote by $\rho$ and $\rho\calE$ the charge density and the
energy density, respectively, associated to the Fermi-Dirac
distribution function $\calF_{\mu,T}$~:
\beq\label{rho}
\rho_{\mu,T}(t,x):= \int_\RR N \calF_{\mu,T}(t,x,\eps) \,d\eps =
\sum_{n\in \NN^*}\int_{\RR^2}\calF_{\mu,T}(t,x,\eps_n)\,dk, \eeq \beq\label{calE}
\rho\calE_{\mu,T}(t,x) := \int_\RR N \calF_{\mu,T}(t,x,\eps) \eps
\,d\eps = \sum_{n\in \NN^*} \int_{\RR^2} \eps_n
\calF_{\mu,T}(t,x,\eps_n)\,dk.
\eeq
We can state easily that for a Fermi-Dirac function, we have
$$
\nabla_x \calF_{\mu,T}(t,x,\eps) = -\calF_{\mu,T}(1-\eta\calF_{\mu,T})
\left(\eps \nabla_x(\frac{1}{k_BT}) - \nabla_x (\frac{\mu}{k_BT})\right).
$$
Then, equation \fref{EnTr} reads, using $\int_\RR S_1(\calF)\,d\eps=0$,
\begin{eqnarray}
\label{ET1}
&\ds \pa_t \rho_{\mu,T} - \nabla_x \cdot J_1 = 0, \\[2mm]
\label{ET2} &\ds \pa_t (\rho\calE_{\mu,T}) + \sum_{n
\in \NN^*} 2\pi\frac{m^*}{\hbar^2}\pa_t\Epsilon_n
\int_{\epsilon_n}^\infty \calF_{\mu,T}(t,x,\eps) \,d\eps - \nabla_x
\cdot J_2 = W,
\end{eqnarray}
where we denote
\begin{eqnarray}
\label{J1}
&&\ds J_1 := \int_\RR D(t,x,\eps) \cdot \nabla_x \calF_{\mu,T}(t,x,\eps) \,d\eps
= \DD_{00} \nabla_x(\frac{\mu}{k_BT}) - \DD_{01}\nabla_x (\frac{1}{k_BT}), \\[2mm]
\label{J2}
&&\ds J_2 := \int_\RR \eps\, D(t,x,\eps) \cdot \nabla_x \calF_{\mu,T}(t,x,\eps)
 \,d\eps = \DD_{10} \nabla_x(\frac{\mu}{k_BT}) - \DD_{11}\nabla_x
(\frac{1}{k_BT}),
\end{eqnarray}
and where the diffusion coefficients are defined by
\beq\label{diffcoeff}
\DD_{ij}(t,x)=\int_\RR D(t,x,\eps) \eps^{i+j}
\calF_{\mu,T} (1- \eta\calF_{\mu,T})\,d\eps, \qquad \mbox{ for }\ i,
j = 0,1,
\eeq
with $D$ being defined in \fref{D}. The system
\fref{ET1}--\fref{diffcoeff} forms the energy-transport model in
the transport direction for a partially confined electron gas. We
recover the general form for energy-transport models for
semiconductors (see \cite{nbadegond,nbadegondgenieys,jungel} and
references therein). The right hand side of the energy equation
\fref{ET2} is the so-called relaxation term $W$ and it is defined by
\beq\label{relaxW}
W := \int_\RR \eps S_1(\calF_{\mu,T}) \,d\eps
=\sum_{n\in \NN^*}\int_{\epsilon_n}^{+\infty}
\left(\int_{S_{\eps-\epsilon_n}} Q^0_{ph,1}(f)_n
dN_{\eps-\epsilon_n}(k)\,\right) \eps \,d\eps. \eeq

As for the SHE model \fref{SHE}, the presence of
the subband energies in the diffusion matrix and in the relaxation
term reflects the effect of the confinement in the transport
equation.

Then we have the following important property for the diffusion matrix
which is an easy consequence of expression \fref{diffcoeff} and Lemma \ref{Dpos}.
\begin{lemma}\label{DDpositif}
The diffusion matrix
$$
\calD := \begin{pmatrix}
\DD_{00} & \DD_{01} \\
\DD_{10} & \DD_{11}
\end{pmatrix}
$$
is a symmetric and positive definite matrix.
\end{lemma}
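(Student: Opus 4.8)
The plan is to verify the two claimed properties — symmetry and positive definiteness — directly from the defining integral \fref{diffcoeff}, using Lemma \ref{Dpos} as the essential input about $D(t,x,\eps)$. For symmetry of $\calD$, the only off-diagonal comparison is between $\DD_{01}$ and $\DD_{10}$; but by \fref{diffcoeff} both equal $\int_\RR D(t,x,\eps)\,\eps\,\calF_{\mu,T}(1-\eta\calF_{\mu,T})\,d\eps$, so $\DD_{01}=\DD_{10}$ on the nose. Note also that each $\DD_{ij}$ is itself a symmetric $2\times 2$ matrix, being the integral of the symmetric matrices $D(t,x,\eps)$ against the scalar weight $\eps^{i+j}\calF_{\mu,T}(1-\eta\calF_{\mu,T})$; this is what makes $\calD$, viewed as a $4\times 4$ matrix acting on $\RR^2\oplus\RR^2$, genuinely symmetric.

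For positive definiteness, I would test $\calD$ against an arbitrary pair of vectors $(\xi_0,\xi_1)\in\RR^2\times\RR^2$ and compute the quadratic form
\[
\xi_0^T\DD_{00}\xi_0 + 2\,\xi_0^T\DD_{01}\xi_1 + \xi_1^T\DD_{11}\xi_1
= \int_\RR \big(\xi_0+\eps\,\xi_1\big)^T D(t,x,\eps)\,\big(\xi_0+\eps\,\xi_1\big)\,\calF_{\mu,T}(1-\eta\calF_{\mu,T})\,d\eps,
\]
which follows by expanding the square and recognizing the three terms as the respective $\DD_{ij}$ integrals. Since $D(t,x,\eps)\geq 0$ by Lemma \ref{Dpos} and the scalar weight $\calF_{\mu,T}(1-\eta\calF_{\mu,T})$ is strictly positive wherever $\calF_{\mu,T}\in(0,1/\eta)$ (equivalently for all finite $\eps$ when $\eta>0$, and $\calF_{\mu,T}>0$ always when $\eta=0$), the integrand is everywhere nonnegative, so the form is $\geq 0$; hence $\calD$ is at least positive semidefinite.

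The one point requiring a little care — and the place I expect the only real obstacle — is strict positivity: one must exclude the possibility that the integral vanishes for some nonzero $(\xi_0,\xi_1)$. This would force $\big(\xi_0+\eps\xi_1\big)^T D(t,x,\eps)\big(\xi_0+\eps\xi_1\big)=0$ for a.e.\ $\eps$, and the remedy is to invoke that $D(t,x,\eps)$ is not merely nonnegative but in fact positive definite on the range of energies that actually carries mass — from \fref{Dexplicit} (or the general argument behind Lemma \ref{Dpos}) $D(t,x,\eps)=c(t,x,\eps)\,\mathrm{Id}$ with $c>0$ once $\eps>\Epsilon_1$, so the vanishing would require $\xi_0+\eps\xi_1=0$ for a.e.\ $\eps>\Epsilon_1$, which is impossible unless $\xi_0=\xi_1=0$ since $\eps\mapsto\xi_0+\eps\xi_1$ is affine and cannot vanish on a set of positive measure otherwise. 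This yields strict positive definiteness of $\calD$ and completes the proof. \qed
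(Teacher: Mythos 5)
Your argument is correct and is precisely what the paper has in mind: the paper offers no written proof, stating only that the lemma is an easy consequence of \fref{diffcoeff} and Lemma \ref{Dpos}, and your expansion of the quadratic form into $\int_\RR(\xi_0+\eps\,\xi_1)^T D(t,x,\eps)(\xi_0+\eps\,\xi_1)\,\calF_{\mu,T}(1-\eta\calF_{\mu,T})\,d\eps$, using the symmetry and nonnegativity of $D$ together with the strictly positive weight, is exactly the computation being alluded to. Your further observation that strict (rather than semi-) definiteness requires $D$ to be positive definite for $\eps>\Epsilon_1$ --- which you obtain from \fref{Dexplicit} (or the coercivity of $-Q_0$ underlying Lemma \ref{Dpos}) combined with the fact that the affine map $\eps\mapsto\xi_0+\eps\,\xi_1$ cannot vanish on a set of positive measure unless $\xi_0=\xi_1=0$ --- is, if anything, more careful than the paper's one-line claim.
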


\begin{remark}
After straightforward calculations, we can have an explicit expression
of $\rho$ and $\rho\calE$. In fact,
$$
\rho_{\mu,T}(t,x)=\sum_{n\in
\NN^*}\int_{\epsilon_n}^{\epsilon_{n+1}} 2\pi \frac{m^*}{\hbar^2}n
\calF_{\mu,T}(t,x,\eps)\,d\eps = \frac{2\pi m^* k_B T}{\eta\hbar^2}
\sum_{n\in \NN^*}\log(1+\eta
\exp(\frac{\mu-\Epsilon_n}{k_B T})),
$$
$$
\rho\calE_{\mu,T}(t,x) = \sum_{n\in \NN^*} 2\pi
\frac{m^*}{\hbar^2} n \int_{\epsilon_n}^{\epsilon_{n+1}}  \eps \,
\calF_{\mu,T}(t,x,\eps) \,d\eps
$$
if $\eta >0$. For Boltzmann statistics $\eta=0$, we have
\beq\label{rhoexpl}
\rho_{\mu,T}(t,x)= \sum_{n\in\NN^*} 2\pi\frac{m^*}{\hbar^2} k_B T(t,x)
\exp (\frac{\mu(t,x)-\Epsilon_n(t,x)}{k_B T(t,x)}),
\eeq
\beq\label{Eexpl} \rho\calE_{\mu,T}(t,x) =
\sum_{n\in \NN^*} 2\pi\frac{m^*}{\hbar^2} k_B
T(t,x) (\Epsilon_n(t,x) + T(t,x))
\exp(\frac{\mu(t,x)-\Epsilon_n(t,x)}{k_B T(t,x)}). \eeq
\end{remark}

%

\subsection{Relaxation term W}

The relaxation term defined in \fref{relaxW} measures the
influence of the interaction of phonons with the charge carriers.
In \cite{ddspnum} we have formally established that a diffusion
limit of the kinetic Boltzmann transport equation coupled to
subband model in the scaling of dominant phonon-electron interaction
leads to a drift-diffusion system in the transport direction
coupled to the subband model.
Using the coarea formula we can rewrite \fref{relaxW} as
$$
W=\sum_{n\in \NN^*} \int_{\RR^2} \eps_n
Q^0_{ph,1}(\calF_{\mu,T})\,dk.
$$
Moreover, we have
$$
\begin{array}{l}
\ds \sum_{n\in \NN^*} \int_{\RR^2} \eps_n
Q^\alpha_{ph}(\calF_{\mu,T})_n\,dk =
\sum_{n,n'} \int_{\RR^4} \Phi^{ph}_{n,n'}(k,k') \\[2mm]
\qquad\ds ([(N_{ph}+1) \delta(\eps_n - \eps'_{n'}+\alpha^2\eps_{ph})
+ N_{ph} \delta(\eps_n - \eps'_{n'}-\alpha^2\eps_{ph})]
\eps_n \calF'_{n'}(1-\eta \calF_n) - \\[2mm]
\qquad\ds [(N_{ph}+1) \delta(\eps'_{n'} - \eps_n+\alpha^2\eps_{ph}) +
N_{ph} \delta(\eps'_{n'} - \eps_n-\alpha^2\eps_{ph})]
\eps_n \calF_n(1-\eta \calF'_{n'}))\,dkdk',
\end{array}
$$
where the notation $\eps'_{n'}$, $\calF_n$ and $\calF'_{n'}$ stands,
respectively, for $\eps_{n'}(k')$, $\calF_{\mu,T}(\eps_n)$ and
$\calF_{\mu,T}(\eps'_{n'})$. Since the Fermi-Dirac distribution function is
energy-dependent, we have
$$
\begin{array}{l}
\ds \sum_{n\in \NN^*} \int_{\RR^2} \eps_n
Q^\alpha_{ph}(\calF_{\mu,T})_n\,dk = \sum_{n,n'} \int_{\RR^4}
\Phi^{ph}_{n,n'}(k,k')
(1-\eta\calF'_{n'})(1-\eta\calF_n) \\[2mm]
\qquad\ds ([(N_{ph}+1) \delta(\eps_n - \eps'_{n'}+\alpha^2\eps_{ph})
+ N_{ph} \delta(\eps_n - \eps'_{n'}-\alpha^2\eps_{ph})]
\eps_n M'_{n'} - \\[2mm]
\qquad\ds [(N_{ph}+1) \delta(\eps'_{n'} - \eps_n+\alpha^2\eps_{ph}) +
N_{ph} \delta(\eps'_{n'} - \eps_n-\alpha^2\eps_{ph})]
\eps_n M_n)\,dkdk',
\end{array}
$$
where $M_n=e^{(\mu-\eps_n)/(k_B T)}$ is the Maxwellian and it
satisfies $\calF_n=M_n(1-\eta \calF_n)$. Moreover,
$$
\begin{array}{l}
\ds \sum_{n\in \NN^*} \int_{\RR^2} \eps_n
Q^\alpha_{ph}(\calF_{\mu,T})_n\,dk = \sum_{n,n'} \int_{\RR^4}
\Phi^{ph}_{n,n'}(k,k')
(1-\eta\calF'_{n'})(1-\eta\calF_n) \\[2mm]
\qquad\ds (\delta(\eps_n - \eps'_{n'}+\alpha^2\eps_{ph})
[(N_{ph}+1) M'_{n'}- N_{ph} M_n] \eps_n - \\[2mm]
\qquad\ds \delta(\eps'_{n'} - \eps_n+\alpha^2\eps_{ph})
[(N_{ph}+1) M_n- N_{ph}M'_{n'}] (\eps'_{n'}+\alpha^2\eps_{ph})) \,dkdk'.
\end{array}
$$
By changing the variable $\eps_n$ with $\eps'_{n'}$ in the first term
of the sum, we notice that
only the $\alpha^2$-factor term does not vanish in this last
identity, which can be rewritten as
$$
\begin{array}{l}
\ds \sum_{n,n'} \int_{\RR^4} \Phi^{ph}_{n,n'}
(1-\eta\calF'_{n'})(1-\eta\calF_n) \eps_{ph} \delta(\eps_n -
\eps'_{n'} -\alpha^2\eps_{ph})
[N_{ph}M'_{n'}-(N_{ph}+1) M_n]\,dkdk' =  \\[2mm]
\ds \sum_{n,n'} \int_{\RR^4} \Phi^{ph}_{n,n'}
(1-\eta\calF'_{n'})(1-\eta\calF_n) \eps_{ph} \delta(\eps_n -
\eps'_{n'} -\alpha^2\eps_{ph}) N_{ph} M_n (e^{\frac{\alpha^2
\eps_{ph}}{k_B T}}- e^{\frac{\alpha^2\eps_{ph}}{k_B T_L}})\,dkdk',
\end{array}
$$
where the phonon occupation number $N_{ph}$ is defined in \eqref{Nph}. 
Letting $\alpha\to 0$, we have that
$N_{ph}(e^{\alpha^2\eps_{ph}/k_BT}-e^{\alpha^2\eps_{ph}/k_BT_L})\to
T_L(\frac{1}{T} - \frac{1}{T_L})$. Thus 
\beq\label{Wrelax1} W=
T_L(\frac{1}{T} - \frac{1}{T_L}) \sum_{n,m} \int_{\RR^4}
\Phi^{ph}_{n,n'}(k,k') \calF_n (1-\eta\calF_n) \eps_{ph}
\delta(\eps_n - \eps_{n'}')\,dkdk'. \eeq

The following lemma proves that $W$ is a temperature relaxation term
which relaxes $T$ to the lattice temperature $T_L$.
\begin{lemma}\label{temprelax}
Let $W$ be defined in \fref{relaxW}. Then, we have
$$
W\cdot (T-T_L) \leq 0.
$$
\end{lemma}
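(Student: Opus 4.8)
The plan is to read off the sign of $W$ directly from the reformulation \eqref{Wrelax1} established just above, namely
$$W = T_L\Bigl(\frac{1}{T} - \frac{1}{T_L}\Bigr) \sum_{n,n'} \int_{\RR^4} \Phi^{ph}_{n,n'}(k,k')\, \calF_n(1-\eta\calF_n)\, \eps_{ph}\, \delta(\eps_n - \eps'_{n'})\, dk\,dk',$$
by checking that the double sum on the right is nonnegative and that the scalar prefactor $T_L(1/T - 1/T_L)$ has sign opposite to that of $T - T_L$; multiplying the two then gives the claim.

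First I would argue that the double sum in \eqref{Wrelax1} is $\geq 0$, which is just a matter of inspecting the integrand factor by factor. The cross-section $\Phi^{ph}_{n,n'}$, being a scattering rate, is nonnegative; the phonon energy $\eps_{ph}$ is positive; the Dirac measure $\delta(\eps_n - \eps'_{n'})$ — understood, as everywhere in the paper, via the coarea formula \eqref{coarea} — is a nonnegative surface measure; and $\calF_n(1-\eta\calF_n)\geq 0$ because the Fermi--Dirac function obeys $0\leq \eta\calF_{\mu,T}\leq 1$, which is exactly the Pauli bound $0\leq 1-\eta f\leq 1$ imposed on admissible distributions (and trivially $\calF_{\mu,T}\geq 0$ when $\eta=0$). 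Hence the sum is a nonnegative quantity.

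It then remains to observe that, since $T=T(t,x)$ and $T_L$ are both positive,
$$T_L\Bigl(\frac{1}{T} - \frac{1}{T_L}\Bigr)(T-T_L) = \frac{(T_L - T)(T - T_L)}{T} = -\frac{(T-T_L)^2}{T}\;\leq\; 0,$$
so that $W\cdot(T-T_L)$ equals this nonpositive scalar multiplied by the nonnegative sum of the previous step, giving $W\cdot(T-T_L)\leq 0$. There is no real obstacle here; the only point deserving a moment's care is that the chain of manipulations producing \eqref{Wrelax1} (the $\alpha^2$-expansion of $Q^\alpha_{ph}$, the substitution $\eps_n\leftrightarrow\eps'_{n'}$, and the passage to the limit $\alpha\to 0$ with $N_{ph}(e^{\alpha^2\eps_{ph}/k_BT}-e^{\alpha^2\eps_{ph}/k_BT_L})\to T_L(\tfrac1T-\tfrac1{T_L})$) is carried out formally; once \eqref{Wrelax1} is granted, the sign assertion is immediate.
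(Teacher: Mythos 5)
Your proof is correct and follows the paper's own route: the paper likewise deduces the sign condition immediately from \fref{Wrelax1}, the prefactor $T_L(1/T-1/T_L)$ supplying the factor of sign opposite to $T-T_L$ while the remaining sum is nonnegative. You have simply spelled out the nonnegativity of the integrand and the algebra $T_L(1/T-1/T_L)(T-T_L)=-(T-T_L)^2/T$ in more detail than the paper's one-line proof.
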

\begin{proof}
The proof of this result is an immediate consequence of
\fref{Wrelax1}.
\end{proof}

\subsection{Formal derivation of drift-diffusion equation. }
\label{ddsection}
In the case where the electron-phonon scattering is dominant, after a
rescaling we have $W=\frac{1}{\gamma} \widetilde{W}$ with a
parameter $\gamma\ll 1$. Then equation \fref{ET2} with expression
\fref{Wrelax1} implies that formally in the limit
$\gamma \rightarrow 0$, we have $T=T_L$. Then \fref{ET1} leads to
the well-known drift-diffusion model \beq\label{dd} \pa_t\rho_{\mu}
- \nabla_x \left(\frac{\DD_{00}}{k_B T_L} \nabla_x \mu\right) = 0.
\eeq Moreover, assuming $\eta=0$, the equilibrium is then given by a
Boltzmann statistics. We deduce therefore from \fref{rhoexpl} that
for $T=T_L$
\begin{equation}\label{rho:dd}
\rho_\mu = 2\pi \frac{m^*}{\hbar^2} k_BT_L e^{\mu/k_BT_L}
\sum_{n\in \NN^*} e^{-\epsilon_n/k_BT_L}.
\end{equation}
Then
$$
\nabla_x\rho_\mu = \frac{1}{k_BT_L} \rho_\mu \nabla_x \mu + \rho_\mu
\frac{\nabla_x (\sum_n e^{-\epsilon_n/k_BT_L})}{\sum_n e^{-\epsilon_n/k_BT_L}}.
$$
We can introduce as in \cite{ddsp} the effective potential energy defined by
\beq\label{Vs}
V_s =-k_B T_L\log \big(\sum_{n\in
\NN^*} e^{-\epsilon_n/k_BT_L}\big). \eeq
Then, we have
$$
\rho_\mu \nabla_x\mu = \left( k_B T_L\nabla_x\rho_\mu + \rho_\mu \nabla_xV_s\right),
$$
such that, denoting $\DD=\DD_{00}/\rho_\mu$, we recover from
\fref{dd} the standard formulation of the drift-diffusion system
\cite{ddsp}~:
\beq\label{DD} \pa_t\rho_\mu - \nabla_x\cdot (\DD
(k_B T_L\nabla_x \rho_\mu + \rho_\mu\nabla_xV_s))=0. \eeq

\begin{remark}
We conclude the section noticing the similarities and the
differences between the classical and the partially confined
energy-transport model. Due to partial confinement in the $z$ direction,
the electron density in the transport direction contains $T$ as factor
rather than $T^{3/2}$. Moreover, we point out that
the system \fref{ET1}-\fref{diffcoeff} in the variables $\mu/k_bT$,
$-1/k_b T$ is in symmetric form, with the electric forces appearing
in the diffusion coefficients through the eigenenergies
$\Epsilon_n$ (which in turns depend on the electrostatic potential). 
In the partially confined framework, the diffusive
limiting process brought directly to a set of variables which can be
interpreted as dual entropy variables (using the denomination of
nonequilibrium thermodynamics \cite{DeGroot,Kreuzer}). 
In this framework, $T$ can be
interpreted as electron temperature, and the variable $\mu$ as
quasi-Fermi potential energy. This fact is clear considering the
drift-diffusion model \eqref{DD}. Indeed, using the effective
potential energy $V_s$ defined in \fref{Vs}, we can write the
electron density \fref{rho:dd} as
$$ \rho_{\mu}= 2\pi \frac{m^*}{\hbar^2} k_BT_L
e^{(\mu-V_s)/k_BT_L},$$  which gives the classical relation between
electron density and quasi-Fermi energy. A chemical potential can
then be defined as $\mu_{chem}=\mu-V_s$.
\end{remark}

\Section{Numerical simulation}\label{simul}

The device we are modelling in this work is a nanoscale Double-Gate
MOSFET (Metal Oxide Semiconductor Field Effect Transistor) such as in \cite{ddspnum}.
This device consists of a silicon film, characterized by two highly doped
regions near the Ohmic contacts (denoted by source and drain) and
an active region,  called channel, with lower doping.
The silicon film is sandwiched between two thin layers of silicon dioxide
$SiO_2$, each of them with a gate contact.

\begin{figure}[htbp]
\begin{center}
\includegraphics*[width=7.7cm]{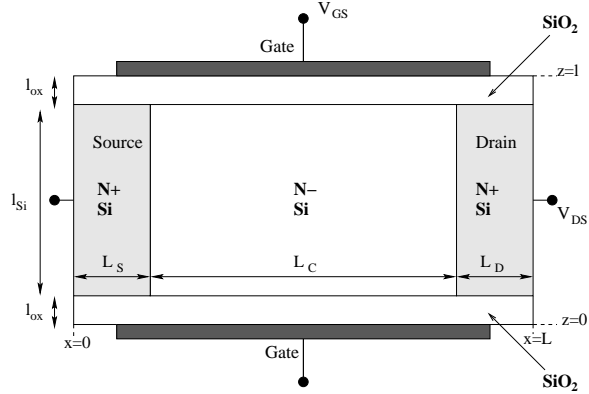}
\end{center}
\caption{\label{dispo} Schematic representation of the modeled
  device. }
\end{figure}

We assume invariance in the $x_2$ direction
(infinite boundary conditions), so that the problem is studied in a
$(x_1,z)$-domain. The device occupies a region of a 2D domain denoted by
$\Omega$=$[0,L]\times[0,\ell]$. A schematic representation of the device is
shown in Figure \ref{dispo}.

\subsection{Energy-transport--Schr\"odinger--Poisson system}
\label{simul1}

In the following, we describe the collisional transport in the Double-Gate MOSFET,
schematized in Figure \ref{dispo}, by means of the energy-transport model
\fref{ET1}--\fref{diffcoeff}. The confinement is described by the
subband decomposition approach, which involves the resolution of the
eigenvalue problem \fref{sdm}, taking also into
account the presence of the oxide. Moreover, in order to provide
explicitly computable diffusion and relaxation terms, the following
physical assumptions will be used.

\begin{assumption}\label{hypo}

\medskip
\begin{itemize}
\item {\bf (H1)} The cross-sections $\Phi^0$ and $\Phi^{ph}$
are assumed to be energy dependent functions and to
have the following expression (see \cite{nbadegond,nbademasc,etnumeriq})
\begin{equation}\label{h1}
\Phi^0_{n,n'}(t,x,k,k') = \phi^0(t,x)
\eps_n^{\bet}, \qquad \Phi^{ph}_{n,n'}(t,x,k,k') =
\phi^{ph}(t,x) \eps_n^\bet,
\end{equation}
with $-2<\bet<2$. In the physical literature, the values $\bet=0$
and $\bet=1/2$ have been used \cite{chenetal,jungel,lyumkis}.

\item {\bf (H2)} The electron density and the energy are assumed to be given
  by non-degenerate Boltzmann statistics, i.e. $\eta = 0$, as in \fref{rhoexpl}, \fref{Eexpl}.

\end{itemize}
\end{assumption}

Using Assumption (H1), we deduce that the diffusion matrix
\fref{Dexplicit} has the following expression \beq\label{Dexpl}
D(t,x,\eps)=\frac{1}{\phi^0(t,x)\eps^\bet \calN(\eps)}
\sum_{n\in \NN^*} (\eps-\Epsilon_n)^+ \,Id, \eeq
where $\calN(\eps)=\max\{n\in \NN^* \ / \ \Epsilon_n \leq \eps\}$ is
the number of non-zero terms in the sum (see Definition \ref{definition}).
Moreover, thanks to the coarea formula in \fref{Wrelax1}, we have
$$
W = -4\pi^2\eps_{ph}(1-\frac{T_L}{T}) \int_{\epsilon_1}^{+\infty}
\phi^{ph}\, \eps^\bet \,\calN^2(\eps)
\calF(\eps)(1-\eta\calF(\eps))\,d\eps,
$$
which under Assumption (H2), reads
$$
W = - 4\pi^2 \eps_{ph} (1-\frac{T_L}{T}) e^{\mu/(k_B T)}
\int_{\epsilon_1}^{+\infty} \phi^{ph}\, \eps^\bet \,\calN^2(\eps)
e^{-\eps/(k_B T)} \,d\eps.
$$
By defining
 \beq\label{W0} W_0 = 4\pi^2 \eps_{ph}
\int_{\epsilon_1}^{+\infty} \phi^{ph}\, \eps^\bet \,\calN^2(\eps)
e^{-\eps/(k_B T)} \,d\eps, \eeq
we have the compact expression
 \beq\label{WW0}
W = -W_0\left(1-\frac{T_L}{T}\right)e^{\mu/(k_B T)}.
\eeq

Assumption (H2) implies also that the density of
charge carriers $N_e(t,x,z)$ is given by
$$
N_e=\sum_{n\in \NN^*} \int_{\RR^2} \calF_{\mu,T}(t,x,
\frac{|k|^2\hbar^2}{2m^*}+\Epsilon_n)\,dk |\chi_n|^2
=\frac{2\pi m^* k_BT}{\hbar^2} \sum_{n\in \NN^*} e^{(\mu-\epsilon_n)/k_BT} |\chi_n|^2.
$$

Finally, the coupled subband energy-transport model
under Assumption \ref{hypo} is given by~: Find
$\mu(t,x),~ T(t,x),~(\Epsilon_n(t,x),\chi_n(t,x))$ for $n \geq 1$,
and $V(t,x,z)$ such that
\begin{eqnarray}
&\ds \pa_t \rho_{\mu,T} - \nabla_x \cdot J_1 = 0,\qquad \mbox{in } (0,L)
\label{eq:density}   \\[2mm]
&\ds \pa_t (\rho\calE_{\mu,T}) + \sum_{n \in \NN^*}
2\pi\frac{m^*}{\hbar^2} \pa_t\Epsilon_n k_B T(t,x) e^{(\mu-\epsilon_n)/k_BT}
- \nabla_x \cdot J_2 = W,~~\mbox{in } (0,L)
\label{eq:energy}    \\[2mm]
&\ds \left\{
\begin{array}{ll}
\ds -\frac{\hbar^2}{2} \,\frac{d}{dz}\left(\frac{1}{m^*(z)}\frac{d}{dz}
  \chi_n\right)- e(V+V_c) \chi_n=\Epsilon_n\chi_n, \qquad \mbox{in } (0,\ell) \\
\ds \chi_n(t,x,\cdot)\in H^1_0(0,\ell),\quad
\ds \int_0^\ell \chi_n\,\chi_{n'}\,dz=\delta_{nn'}\,,
\end{array}
\right.
\label{eq:schrodinger}  \\[2mm]
&\ds \mbox{div}_{x,z}(\eps_R\nabla_{x,z} V) = \frac {e}{\eps_0}
\left(\frac{2\pi m^* k_BT}{\hbar^2}
\sum_{n\in \NN^*} e^{(\mu-\epsilon_n)/k_BT}
|\chi_n|^2- N_D\right), \quad \mbox{in } \Omega,
\label{eq:poisson}
\end{eqnarray}
where the expressions of $\rho_{\mu,T}$ and $\calE_{\mu,T}$ with respect to the unknows
are given in \eqref{rhoexpl}--\eqref{Eexpl}.
In \fref{eq:schrodinger} the effective mass $m^*$
takes different values in the $Si$  and in the $SiO_2$ domain.
Moreover, $V_c$ represents a given potential barrier between the
silicon and the oxide. The currents $J_1$
and $J_2$ are given by the expressions
\begin{eqnarray}
&&\ds J_1= \DD_{00} \nabla_x\big(\frac{\mu}{k_BT}\big) - \DD_{01}
\nabla_x\big(\frac{1}{k_BT}\big), \label{eq:J1}\\[2mm]
&&\ds J_2= \DD_{10} \nabla_x\big(\frac{\mu}{k_BT}\big) - \DD_{11}
\nabla_x\big(\frac{1}{k_BT}\big), \label{eq:J2}
\end{eqnarray}
where, under Assumption \ref{hypo}, the diffusion coefficients are given by
$$
\DD_{ij} = \frac{1}{\phi^0} \sum_{n\in\NN^*}\int_{\epsilon_n}^{+\infty}
\frac{\eps^{i+j-\bet}(\eps-\Epsilon_n)} {\calN(\eps)}
\,e^{(\mu-\eps)/k_BT }\,d\eps.
$$
The relaxation term $W$ is given by \fref{WW0}.

This system is complemented with initial and boundary conditions.
In particular, at the ohmic contacts and at the gate (see Figure \ref{dispo}), we will impose
Dirichlet boundary conditions for the potential, otherwise we fix
homogeneous Neumann boundary conditions, which model isolating
conditions.
\begin{eqnarray}\label{eqbord:V} V(x,z) &= {V_{Gate}},
\quad &\mbox{ for } z\in\{0,\ell\}, x\in\mbox{Gate}\ ; \\
V(x,z) &= V_D, \quad &\mbox{ for } x\in\{0,L\},
z\in(0,\ell);\\
\frac{\pa V}{\pa \nu} &= 0, \quad &\mbox{ elsewhere},
\end{eqnarray}
where $\nu$ is the outward unit normal. Since the transport
occurs only in the longitudinal direction, we just have to
impose boundary conditions in $x=0$ and
$x=L$ for $\mu$ and $T$. The temperature is assumed to be at the
lattice temperature $T_L$,
thus
\beq\label{eqbord:temp} T(x) = T_L, \quad \mbox{ for } x\in \{0,L\}.
\eeq

Then, we consider that the surface density of the charge carriers is
almost constant near the frontiers $x=0$ and $x=L$ and given by
$N_s^b$. The surface density being the integral over $z$ of the
total density ($N_s^b=N^{+} \times \ell_{Si}$), we
deduce \beq\label{eqbord:mu} \mu(x)=\mu_b := k_B T_L\log
\left(\frac{N_s^b \hbar^2}{2\pi m k_B T_L \sum_n
e^{-\epsilon_n/k_BT_L}}\right), \quad \mbox{ for } x\in \{0,L\}.
\eeq

\subsection{Stationary system}
\label{simul2}

Let us introduce the notations
\beq\label{nota:uv}
u=\frac{\mu}{k_BT}, \quad v = -\frac{1}{k_BT}.
\eeq
Then we can rewrite the expressions of the current \fref{eq:J1}--\fref{eq:J2} as
\begin{eqnarray}
\label{J1DD}
&&\ds J_1= \DD_{00}(u,v)\nabla_x u +\DD_{01}(u,v)\nabla_x v,
\\[2mm]
\label{J2DD}
&&\ds J_2= \DD_{10}(u,v)\nabla_x u +\DD_{11}(u,v)\nabla_x v,
\end{eqnarray}
where the diffusion coefficients are given by
\beq\label{diffmat}
\DD_{ij}(u,v) = \frac{1}{\phi^0} \sum_{n\in
\NN^*}\int_{\epsilon_n}^{+\infty}
\frac{\eps^{i+j-\bet}(\eps-\Epsilon_n)} {\calN(\eps)} \,e^{u+\eps
v}\,d\eps.
\eeq
We define the relaxation coefficient in the same way~:
\beq\label{W0uv} W_0(u,v) = 4\pi^2 \eps_{ph}
\int_{\epsilon_1}^{+\infty} \phi^{ph}\, \eps^\bet \,\calN^2(\eps)
e^{(\eps-\mu) v} \,d\eps. \eeq

Then, the stationary version of the energy-transport subband system
\fref{eq:density}--\fref{eq:energy} in variable $u$ and $v$ writes
\begin{eqnarray}
\label{eq1:ETSP}
&\ds -\nabla_x(\DD_{00}(u,v)\nabla_x u +\DD_{01}(u,v)\nabla_x v) = 0, \\[2mm]
\label{eq2:ETSP}
&\ds -\nabla_x(\DD_{10}(u,v)\nabla_x u +\DD_{11}(u,v)\nabla_x v) =
-W_0(u,v)\left(1+k_B T_L v\right),
\end{eqnarray}
Boundary conditions \fref{eqbord:temp}--\fref{eqbord:mu} become
$$
v(x) = v_b := -\frac{1}{k_BT_L}, \quad \mbox{ for } x\in \{0,L\},
$$
and
$$
u(x)=u_b := \log \left(\frac{N_s^b \hbar^2}{2\pi m
k_B T_L \sum_n e^{-\epsilon_n/k_BT_L}}\right), \quad \mbox{ for }
x\in \{0,L\}.
$$

\subsection{Numerical approach for the energy-transport system}
\label{simul3}

We introduce a partition of $[0,L]$ with nodes $x_i$, $i=0,\cdots,N_x$, and
a partition of $[0,\ell]$ with nodes $z_j$, $j=0,\cdots,N_z$.
We assume that the partitions are uniform and denote $h=x_i-x_{i-1}$.
Then, we mesh the domain $[0,L]\times[0,\ell]$ with rectangular triangles
using the nodes ($x_i,z_j$) previously defined.
The Schr\"odinger equations and the
Poisson equation are discretized with conforming $P^1$ finite elements.

\medskip
We consider here in details the discretization
scheme for the equations governing $u$ and $v$, assuming first that
the eigenenergies $\Epsilon_n$ are known. Using the following notations
$$
U=(u,v)^\top, \quad \mathcal J=(J_1,J_2)^\top , \quad
W(U) =(0,W_0(u,v)(1+k_B T_L v))^\top
$$
equations \eqref{J1DD}-\eqref{J2DD} and \eqref{eq1:ETSP}-\eqref{eq2:ETSP}
can be written
in compact form as
\begin{equation}\label{system}
 \mathcal J= \DD(U) \nabla_x U, \quad \nabla_x \cdot \mathcal J =W(U).
\end{equation}
Denoting by $U_i$  an approximation of $U(x_i)$,
we take the piecewise constant
approximation of $U$ given, in the interval $I_i:=(x_{i-1},x_i)$, by
$$
\overline{U}_i =\frac {U_{i-1}+U_i}{2}.
$$
and define the piecewise constant diffusion coefficients and relaxation terms as
\begin{equation}\label{eqnumD00}
\overline \DD_{kl}=\DD_{kl}(\overline U), \quad \mbox{ for }
k,l=0,1; \qquad \overline W = W(\overline U).
\end{equation}

We are going to use a mixed finite element discretization of lowest
order in hybridized form \cite{brezzifortin} (see also \cite{gadau,gadau2}
for applications to ET).
Let us introduce the following finite dimensional spaces~:
\begin{eqnarray*}
  X_{h} &=& \{ \sigma\in L^2(\Omega)\,:\, \sigma(x)=a_i+b_i (x-x_{B_i})
             \mbox{ in } I_i, \;i=1,\ldots,N_x \}, \\
  Y_{h} &=& \{ \xi\in L^2(\Omega)\,:\, \xi\mbox{ is constant in }
             I_i,\;i=1,\ldots,N_x \}, \\
  Z_{{h},\chi} &=& \{ q\mbox{ is defined at the nodes }x_0,\ldots,
             x_{N_x},\, q(x_0) = \chi(0), \;q(x_{N_x}) = \chi(1) \},
\end{eqnarray*}
where $x_{B_i}$ denotes the central point of the interval $I_i$, and $\chi$ is  prescribed.

Then, the mixed-hybrid formulation of \eqref{system}
reads as follows:
Find $\mathcal J_h\in X_h^2$, $P_h\in Y_h^2$,
and $U_h\in Z_{h,u_b} \times Z_{h,v_b}$ such that
\begin{eqnarray}
  \sum_{i=1}^{N_x}{\left(\int_{I_i}{{\phi}_h\cdot{{ \DD(\overline U_i)}^{-1}}\mathcal J_h} dx+
    \int_{I_i}{P_h\cdot\nabla_x{\phi}_h} dx -
    [{\phi}_h\cdot U_h]_{x_{i-1}}^{x_i}\right)} & = & 0
    \label{flux1},\\
  \sum_{i=1}^{N_x}{\left(\int_{I_i}{{\Psi}_h\cdot\nabla_x\mathcal J_h} dx -
    \int_{I_i}{W(\overline U_i) \cdot {\Psi}_h} dx \right)}& = & 0
    \label{flux2},\\
  -\sum_{i=1}^{N_x}{[{\mu}_h\cdot \mathcal J_h]_{x_{i-1}}^{x_i}} & = & 0
    \label{flux3}
\end{eqnarray}
for all ${\phi}_h\in {X}_h^2$, ${\Psi}_h\in {Y}_h^2$, and
${\mu}_h\in {Z}_{h,0}^2$. Equation \eqref{flux1} is derived from the
weak formulation of the first equations in \eqref{system};
\eqref{flux2} comes from the weak form of the second equations in
\eqref{system}; and finally, \eqref{flux3} imposes the continuity of
the currents at the nodes.

Thanks to the discontinuity of the spaces $X_h$ and $Y_h$, we can
apply static condensation in order to reduce the size of the discrete 
system and obtain an algebraic system for the variable $U_h$ only.
More precisely, choosing first the local basis
$$\phi_h=\left\{\begin{array}{ll}(1,0)^\top
        & \mbox{in } I_i\\
        (0,0)^\top & \mbox{elsewhere}
      \end{array}\right., \
\phi_h=\left\{\begin{array}{ll}(0,1)^\top
        & \mbox{in } I_i\\
        (0,0)^\top & \mbox{elsewhere}
      \end{array}\right.
$$
in \eqref{flux1} and then, analogously choosing $\Psi_h$ in \eqref{flux2},
 we obtain the
piecewise linear (discrete) current
\begin{equation}\label{eqnumcurr}
\mathcal J_{h|{I_i}} = \DD(\overline U_i) ~ \frac{ U_i - U_{i-1}}
{h} + W( \overline U_i) (x-x_{B_i}).
\end{equation}
Imposing continuity at the nodes (through \eqref{flux3}) we obtain
the final system
\begin{equation}\label{eqnumU}
-\DD(\overline U_i) U_{i-1} + (\DD(\overline U_i)+\DD(\overline U_{i+1})) U_{i} -
\DD(\overline U_{i+1})U_{i+1} = -\frac{h^2} {2} (W(\overline U_i) + W(\overline U_{i+1})),
\end{equation}
for $i=1,\ldots,N_x-1$. We point out explicitly that, since the
first component of $W(U)$ is null, the approximation of the current
$J_1$ is piecewise constant (see \eqref{eqnumcurr}) and that, thanks
to \eqref{flux3},
 it is indeed globally constant.


System \eqref{eqnumU} forms a non-linear system in the unknown $(u,v)$ that
can be solved using a Newton algorithm.
We point out that the Jacobian corresponding to this non-linear system can be
easily computed noticing that, from the expressions \fref{eqnumD00}
and \fref{diffmat}, we have
$$
\frac{\pa \overline \DD_{k\ell}^i}{\pa u_i} =
\frac{\pa \overline \DD_{k\ell}^i}{\pa u_{i-1}} =
\frac 12 \overline \DD_{k\ell}^i, \qquad \frac{\pa
\overline \DD_{k\ell}^i}{\pa v_i} = \frac{\pa
\overline \DD_{k\ell}^i}{\pa v_{i-1}} = \frac 12
\overline \DD_{k\ell+1}^i,
$$
and that similar relations hold for the partial derivatives of $\bar{W}^i$.

\begin{remark}\label{DDexpl}
The expression \fref{diffmat} is not practical for numerical purpose.
However, with the definition $\calN(\eps)=\max\{n\in \NN^*~: \Epsilon_n\leq \eps\}$ , we have
$$
\begin{array}{ll}
\ds \DD_{ij}(u,v) & \ds = \frac{1}{\phi^0}
\sum_{n=1}^{+\infty} \sum_{m=n}^{+\infty} \int_{\epsilon_m}^{\epsilon_{m+1}}
\frac{(\eps-\Epsilon_n) \eps^{i+j-\bet}}{m} e^{u+\eps v}\,d\eps \\[2mm]
&\ds = \frac{1}{\phi^0}\sum_{m=1}^{+\infty}
\int_{\epsilon_m}^{\epsilon_{m+1}} (\eps-\frac{\sum_{n=1}^m
\Epsilon_n}{m}) \eps^{i+j-\bet} e^{u+\eps v}\,d\eps,
\end{array}
$$
by interchanging the sums over $m$ and $n$. We can rewrite
$$
\begin{array}{ll}
\ds \DD_{ij}(u,v) = &\ds \frac{1}{\phi^0} \int_{\epsilon_1}^{+\infty}
\eps^{i+j+1-\bet} e^{u+\eps v}\,d\eps \\[2mm]
&\ds + \frac{1}{\phi^0}\sum_{m=1}^{+\infty}
\left(\frac{\sum_{n=1}^{m-1} \Epsilon_n}{m-1}-\frac{\sum_{n=1}^{m}
\Epsilon_n}{m}\right) \int_{\epsilon_m}^{+\infty} \eps^{i+j-\bet}
e^{u+\eps v}\,d\eps,
\end{array}
$$
with the convention that $\sum_{n=1}^{m-1} \Epsilon_n/(m-1) = 0$ for
$m=1$. Then, in the actual numerical computation,
we can get an accurate approximation of $\DD_{ij}$ by truncating the
infinite sum to a finite number of eigenmodes. In fact,
$(\Epsilon_n)_n$ forms an increasing sequence going to $+\infty$,
thus, since $v<0$, we have that $(e^{\epsilon_n v})_n$ fast
decreases to $0$.
\end{remark}

\subsection{Algorithm}
\label{simul4}

We are now ready to describe the algorithm used
for the numerical resolution of the stationary
subband energy-transport model. The first step of
the algorithm is the computation of the thermal equilibrium
solution, with no applied drain-source bias. In this case the
temperature and the Fermi level are constant along the device,
therefore the problem reduces to solving the Poisson equation
\fref{eq:poisson} for a given temperature and Fermi level computed
thanks to the boundary conditions. 
The computed potential at thermal equilibrium is used as a starting
data for
the following Gummel \cite{gummel} iteration process~:
\begin{enumerate}
\item Lets $V_{old}$ be a given potential.
\item We solve the eigenvalue problem \fref{eq:schrodinger} on each vertical
  slice of the domain by diagonalization of the Hamiltonian.
  Therefore we obtain the set $\{\chi_n(x_i,z_j)\}$ and
  $\{\Epsilon_n(x_i)\}$.
\item We implement the Newton procedure which has been described above
  for the computation of $(u,v)$.
\item We compute the density of charge carriers corresponding
to the right hand side of \fref{eq:poisson}
$$
N_e = \sum_{n\in\NN^*} 2\pi\frac{m^*}{\hbar^2}\, \frac{e^{u}}{v}
e^{\epsilon_n v} |\chi_n|^2.
$$
We are then able to solve the Poisson equation \fref{eq:poisson}
with boundary condition \fref{eqbord:V}. Indeed,
due to the strong coupling of the entire system, the simple
resolution of equation \fref{eq:poisson} does not provide a
converging algorithm. Following \cite{caussignac} a Gummel iteration
algorithm is used, amounting to compute the new potential $V_{new}$
by solving the following modified Poisson equation
\begin{equation}
\mbox{div}(\eps_R\nabla V_{new}) + \frac {e}{\eps_0} N_e \frac
{V_{new}} {V_{ref}} = \frac {e}{\eps_0} \left(N_e(1-
\frac{V_{old}}{V_{ref}})- N_D\right), \quad \mbox{in } \Omega,
\end{equation}
with $V_{ref}=k_B T_L/e$.
\item We repeat the last three steps until the quantity
$\|V_{old}-V_{new}\|_{L^\infty}$
becomes sufficiently small.
Once the convergence is reached, we increment the applied 
drain-source bias $V_{DS}$ of $0.02$ V and start a new iteration.
\end{enumerate}

\subsection{Numerical results}

In this section we present and comment the performed numerical
results. The modeled device is schematized in Figure \ref{dispo} and
the physical values are chosen as the ones in
\cite{ddspnum} which are recalled in Table \ref{table}. We take
$N_x=50$ points in the transport direction and $N_z=50$ in the
confined direction for all the tests. The results are presented for
$\bet=\frac 12$ in \fref{h1}, which corresponds to
the so-called Chen model \cite{chenetal}.
\begin{table}
\caption{Table of the main physical values}
\label{table}
\medskip
\begin{center}
\begin{tabular}{|c||c|||c||c|}
\hline
\quad Parameter \quad  & Value & \quad Length\quad \ &\quad Value \quad \  \\
\hline
\hline
$N^+$ & $\quad 10^{26} m^{-3}\quad $ & $L_S$ & $10 nm$ \\
\hline
$N^-$ & $10^{21} m^{-3}$ & $L_C$ & $30 nm$ \\
\hline
$U_c$ & $3\ eV$ & $L_D$ & $10 nm$ \\
\hline
$\varepsilon_R[Si]$ & $11.7$ & $\ell_{ox}$ & $3 nm$ \\
\hline
$\varepsilon_R[SiO_2]$ & $3.9$ & $\ell_{Si}$ & $5 nm$ \\
\hline
\end{tabular}
\end{center}
\end{table}

Some other physical coefficients should be determined. The effective mass
is $m^*=0.19\, m_e$ with $m_e$ the electron mass, the lattice temperature is
$T_L=300\, K$ and the scattering coefficient for the elastic
collisions  is  the one used in \cite{ddspnum}
$\phi^0=\frac{1}{\mu^0 n_i}$, where the low  field mobility is taken
as $\mu^0=0.12\, m^2 V^{-1} s^{-1}$ and the intrinsic density is given
by $n_i=10^{10}\, m^{-2}$. We have to fix the value of the scattering
coefficient $\phi^{ph}$ for the electron-phonon interaction.
As noticed in subsection \ref{ddsection}, when $\phi^{ph} \to +\infty$, the model converges formally
to the subband drift-diffusion system presented in \cite{ddsp,ddspnum}.
It is then interesting to compare the numerical results for large and small values
of $\phi^{ph}$.
Figure \ref{DDET} displays the $I-V_{DS}$ characteristics
for $\phi^{ph}=10^{-4}/\phi^0$ and $\phi^{ph}=10^5/\phi^0$.
As expected and as noticed in \cite{baccarani}, the energy-transport
model gives higher currents compared with the drift-diffusion model,
due to the electron velocity overshoot within the channel.
Figure \ref{DDET} (right) shows the temperature for $\phi^{ph}=10^5/\phi^0$,
confirming that we are in the drift-diffusion regime, where the temperature
is constant. Indeed we check numerically  
that $\max T - \min T=2.6584\, 10^{-7}$ K.

\begin{figure}[!ht]
\includegraphics*[width=7.7cm]{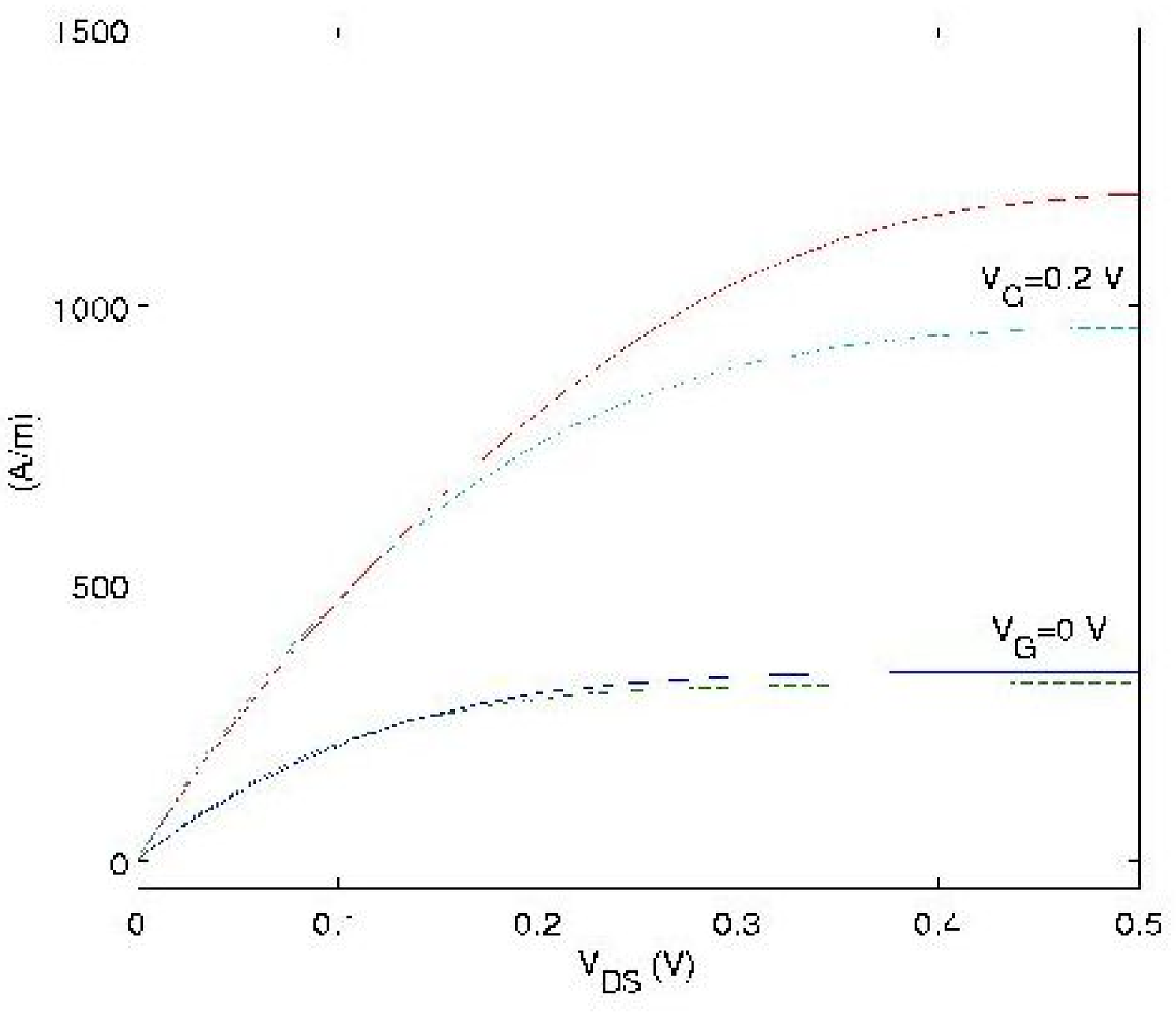}
\includegraphics*[width=7.7cm]{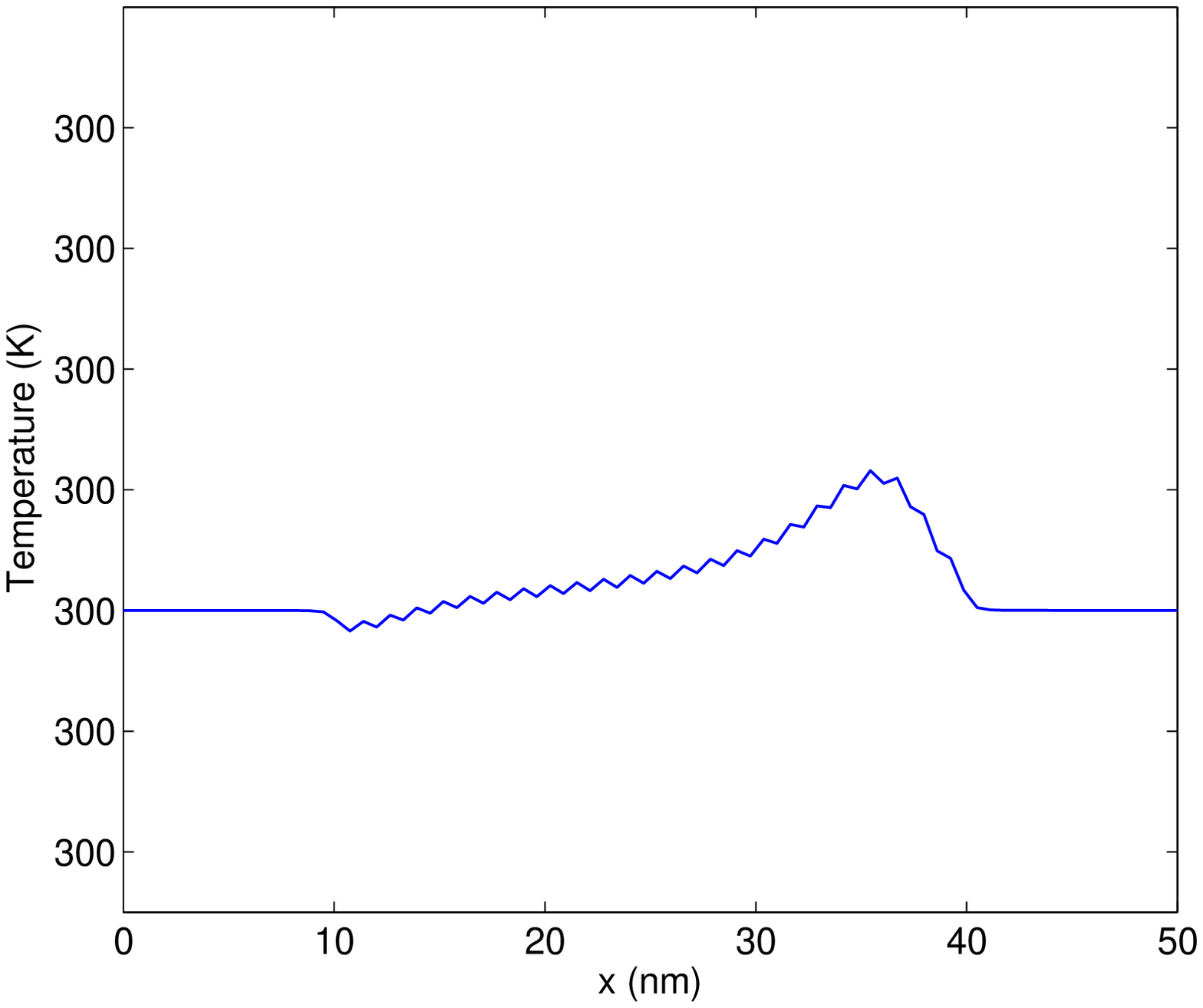}
\caption{{\it Left} : $I-V_{DS}$ characteristics for $V_G= 0 V$ and $V_G= 0.2 V$.
The dashed line corresponds to $\phi^{ph}=10^{-4}/\phi^0$, the solid
line corresponds to $\phi^{ph}= 10^5/\phi^0$ which is a good
approximation to the drift-diffusion model.
{\it Right} : Temperature in the device for $\phi^{ph}= 10^5/\phi^0$. We see that
the temperature is almost constant.}
\label{DDET}
\end{figure}

In the rest of the section, we present the results only for 
$\phi^{ph}=10^{-4}/\phi^0$ which corresponds to the energy-transport regime.
Figure \ref{IV} displays the computed current vs drain-source applied bias
characteristics with this chosen value.
We present in Figure \ref{IV} (left) the characteristics for different
numerical values of the Gate voltage $V_G$ and with $\ell_{Si}=5$ nm.
In Figure \ref{IV} (right), we display the characteristics for $V_G = 0$ V
and for different geometry of the devices : $\ell_{Si}= 4$, $5$ or $7$ nm.
These characteristics are comparable to the one obtained in
\cite{baccarani, claudia, ddspnum}.
We present in Figure \ref{temp} the evolution of the temperature in
the device with respect to the drain-source voltage for two different
values of the Gate voltage.
Figure \ref{meanvelo} displays the evolution of the mean velocity,
defined by $J_1/(q \rho_{\mu,T})$, where the one dimensional density is 
given in \eqref{rhoexpl} and the current in \eqref{J1DD}.
As expected, we notice an overshoot of the velocity at the
frontier between the channel and the drain for high value of $V_{DS}$.

\begin{figure}[!ht]
\includegraphics*[width=7.7cm]{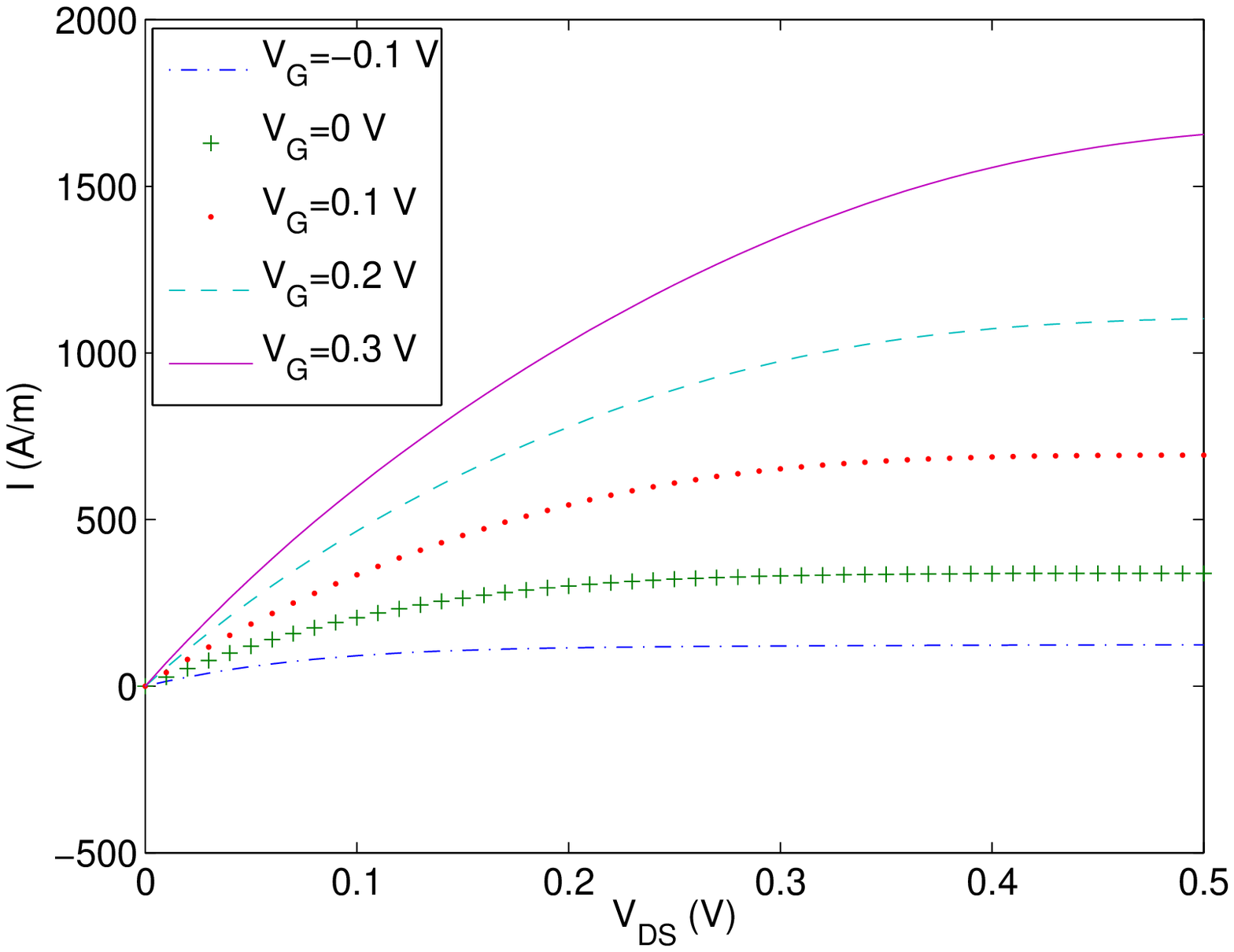}
\includegraphics*[width=7.7cm]{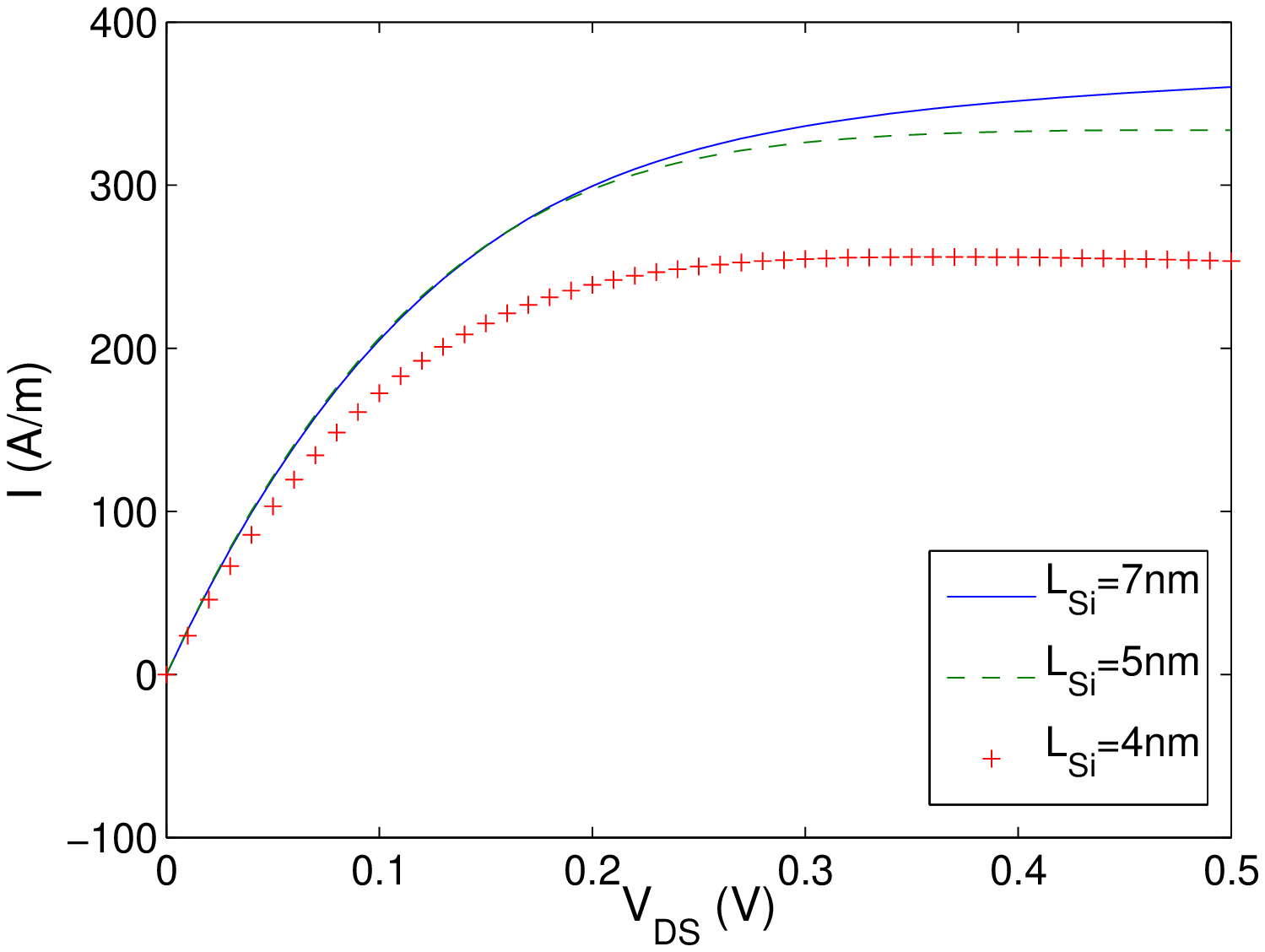}
\caption{$I-V_{DS}$ characteristics for different Gate voltages $V_G$ with 
$\ell_{Si}=5 nm$ (left) and for different width of the Silicon in the 
DG-MOSFET with $V_{G}=0$ V (right).}
\label{IV}
\end{figure}

\begin{figure}[!ht]
\includegraphics*[width=7.7cm]{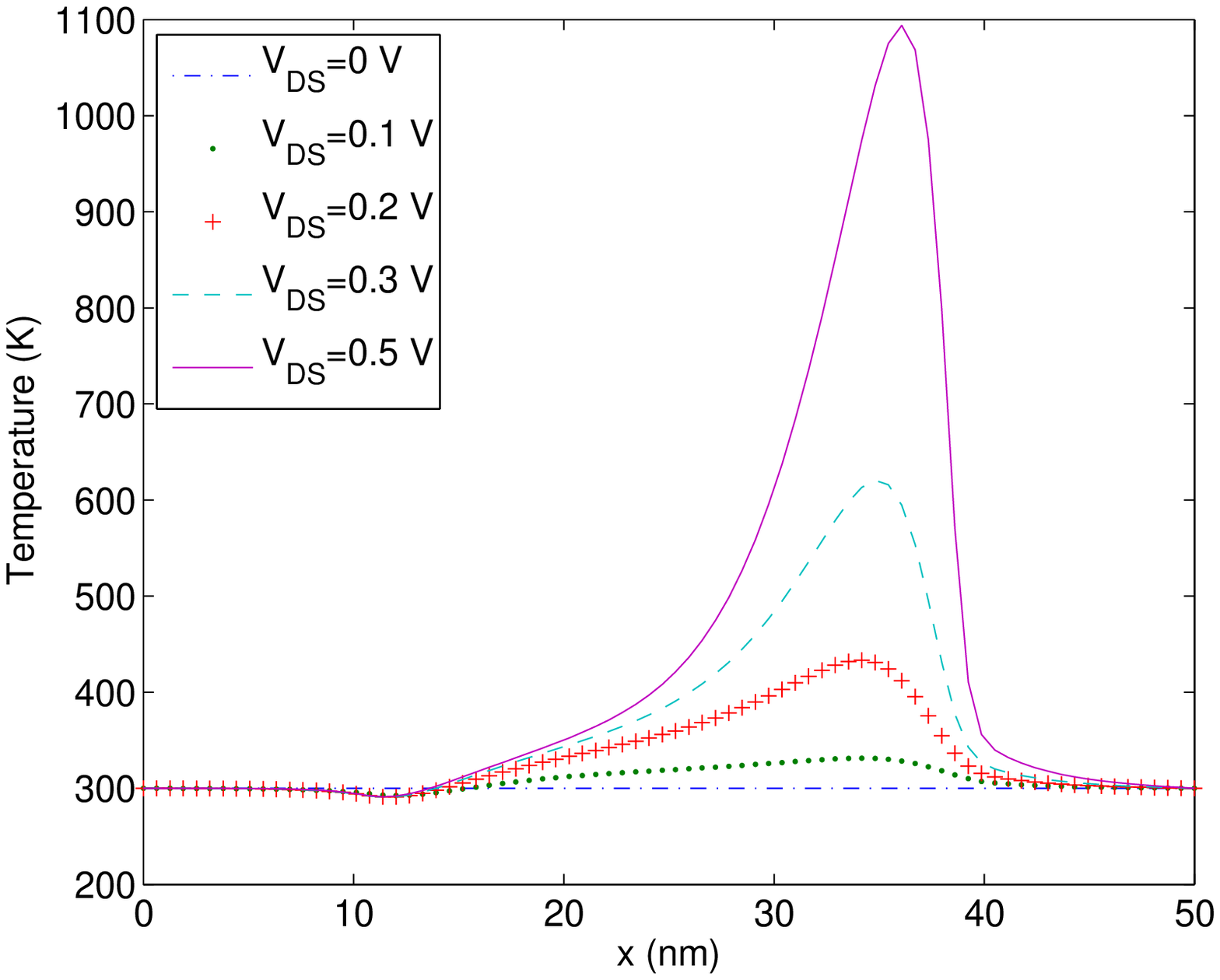}
\includegraphics*[width=7.7cm]{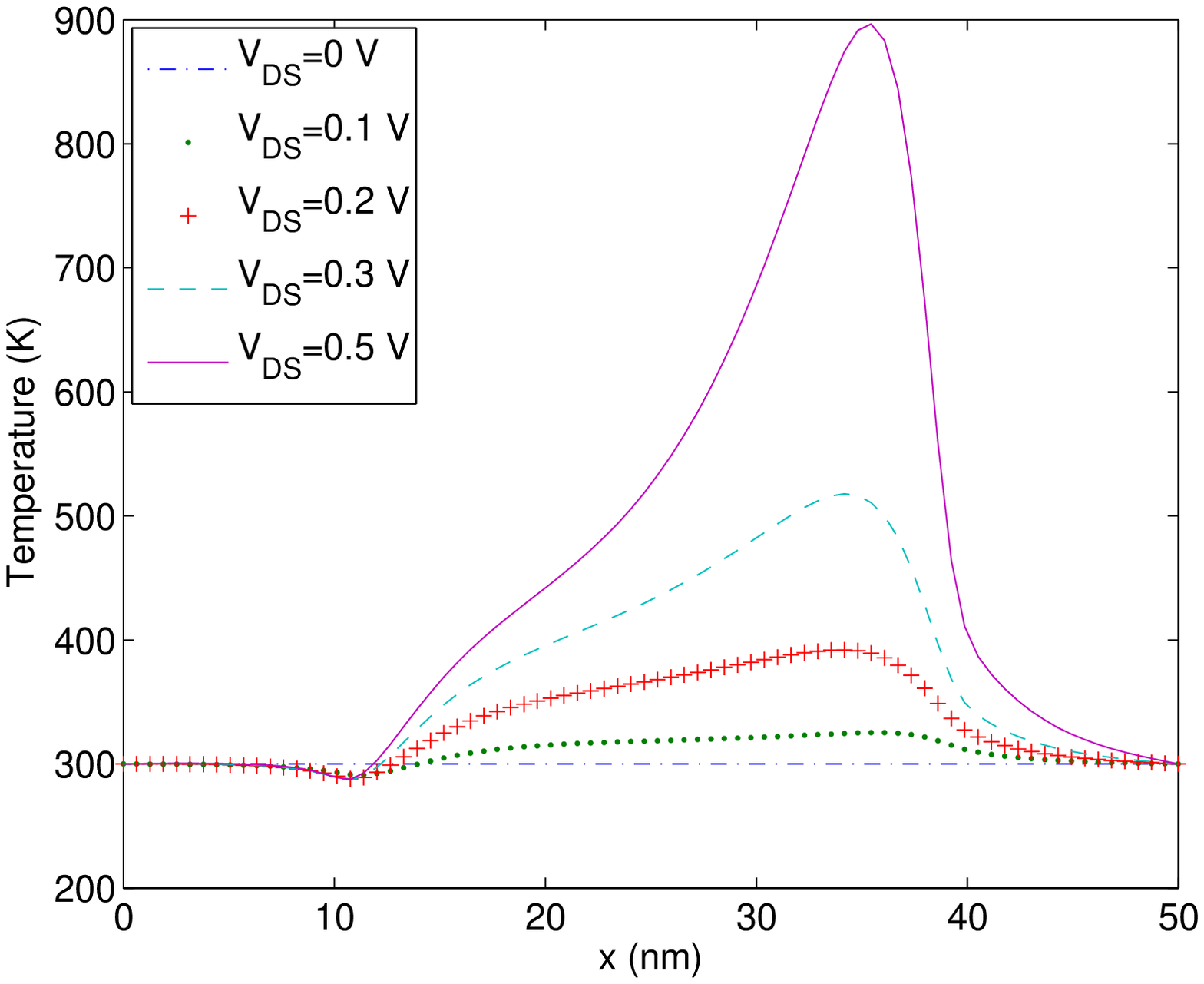}
\caption{Evolution of the temperature in the device for a Gate voltage
$V_G=0 V$ (left) and $V_G=0.2 V$ (right).}
\label{temp}
\end{figure}

\begin{figure}[!ht]
\includegraphics*[width=7.5cm]{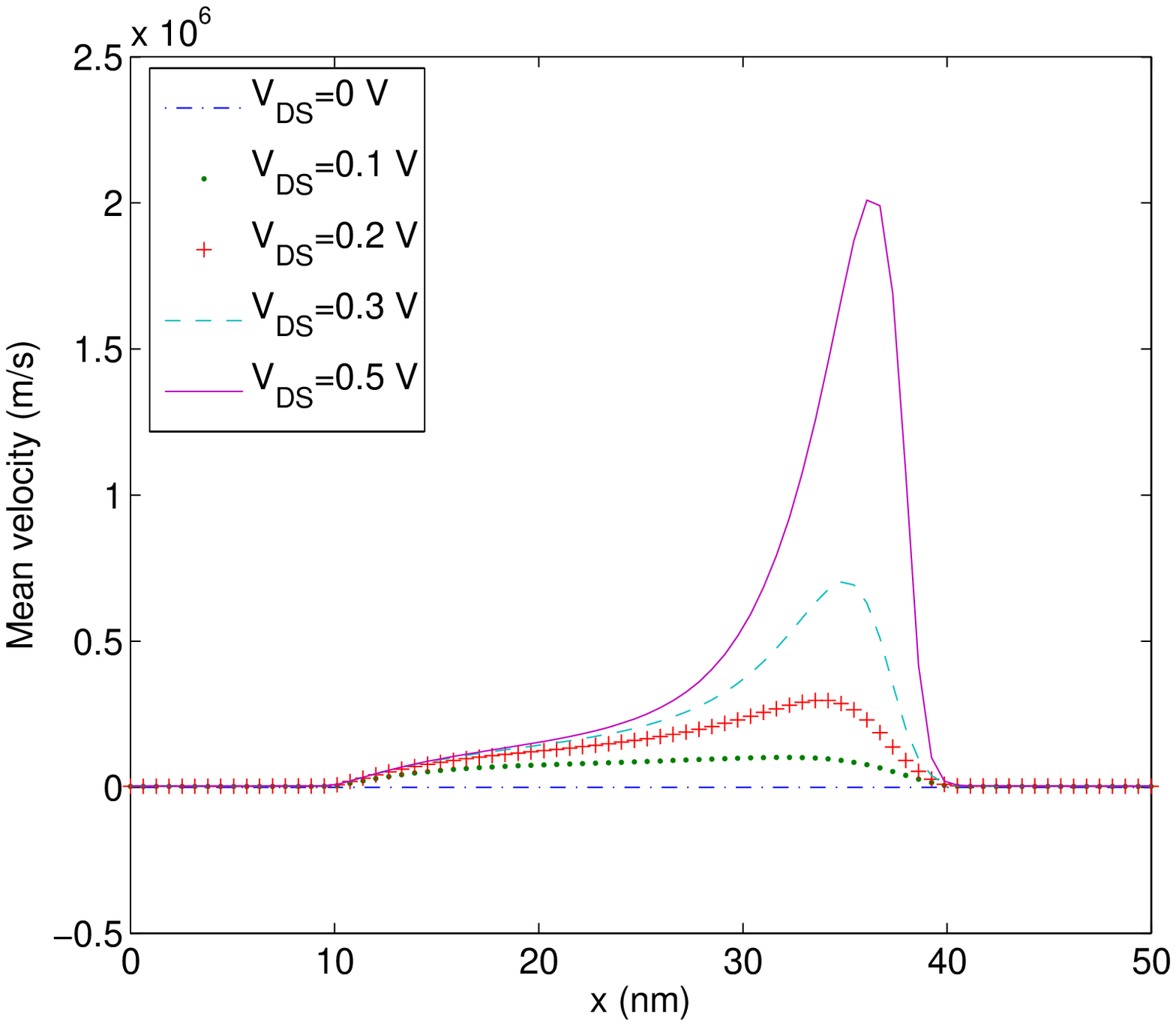}
\includegraphics*[width=7.5cm]{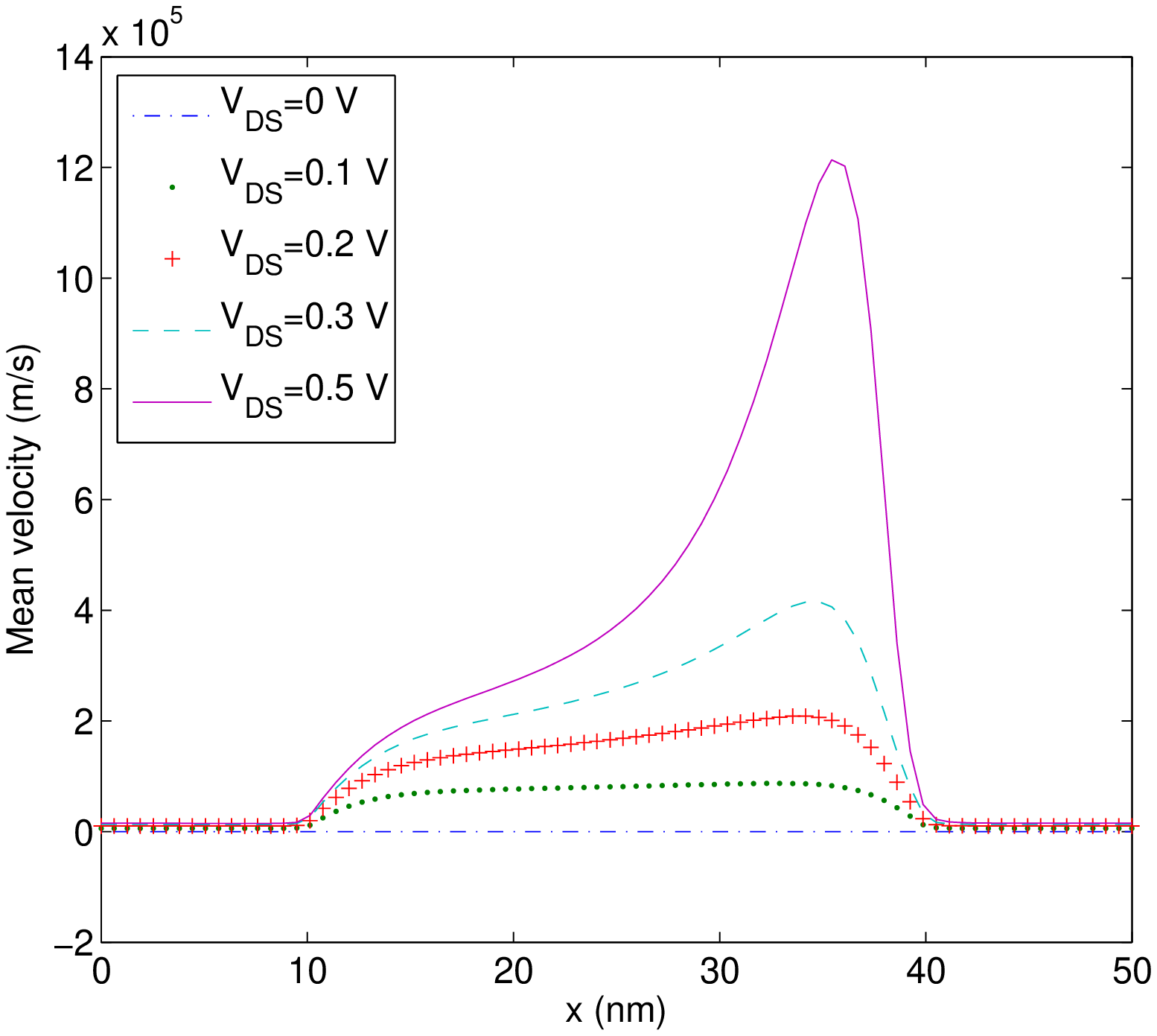}
\caption{Mean velocity for different drain-source potentials $V_{DS}$
and for $V_G=0 V$ (left) and $V_G=0.2 V$ (right).}
\label{meanvelo}
\end{figure}

We plot in Figures \ref{density} the shape of the density in the
device for two differents drain-source voltage. For $V_{DS}=0 V$,
we are at equilibrium and the density is symmetric in the device.
For $V_{DS}=0.5 V$, we notice transport of the charge carriers
in the device.

\begin{figure}[!ht]
\includegraphics*[width=7.7cm]{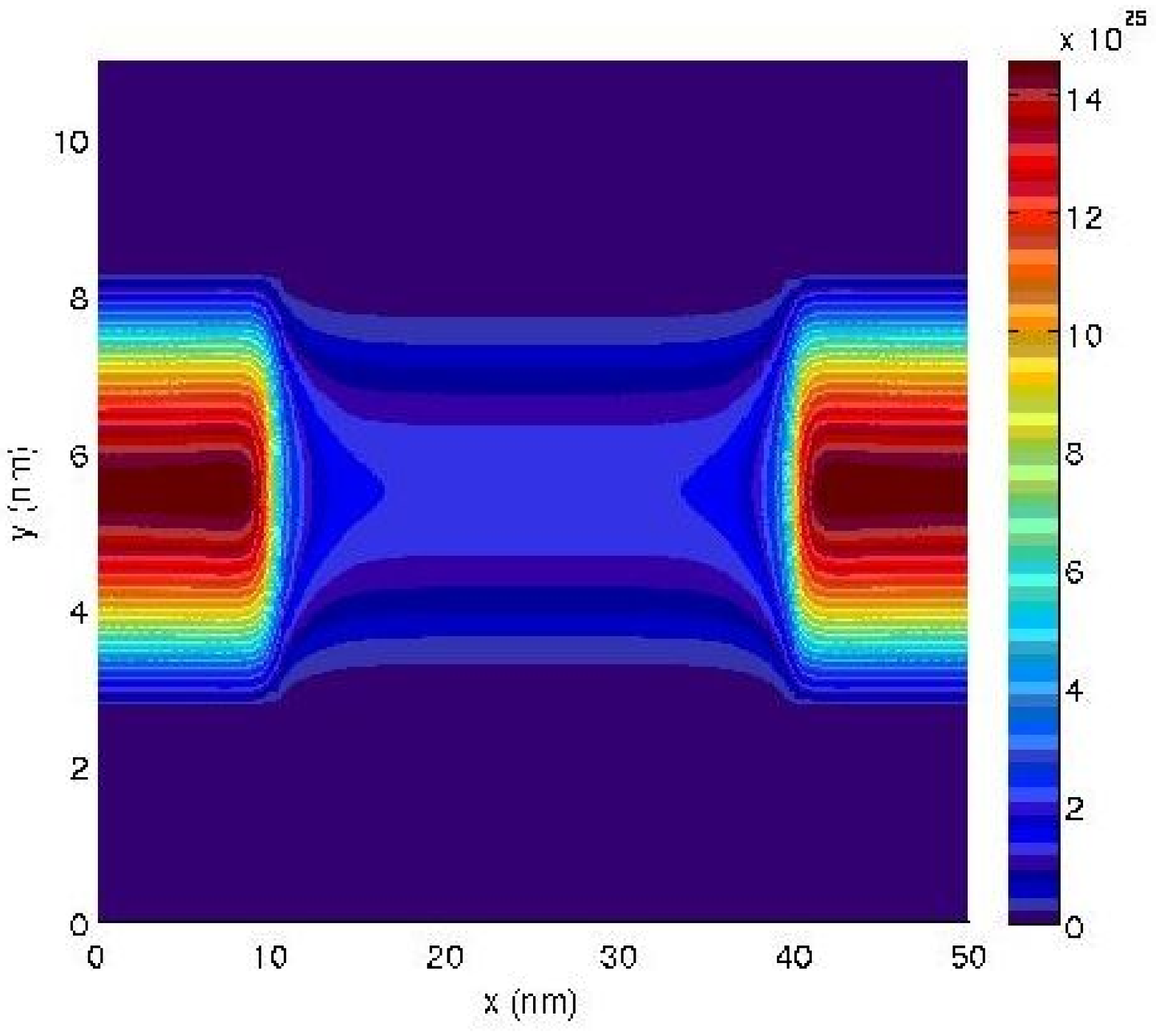}
\includegraphics*[width=7.7cm]{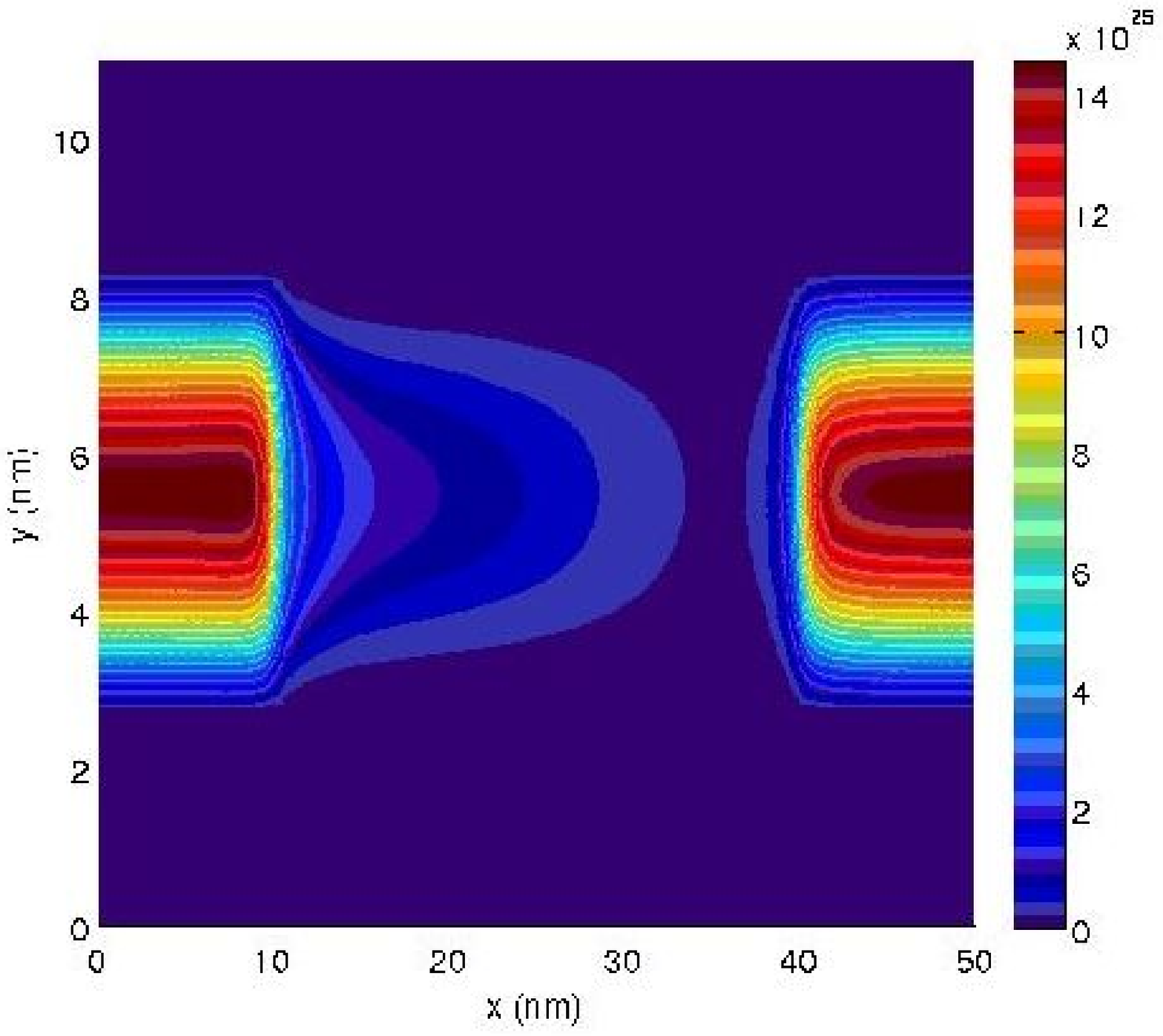}
\caption{Density of electrons in the device for $V_{DS}= 0 V$ (left)
and $V_{DS}=0.5 V$ (right); in this simulation we take $V_G=0.2 V$.}
\label{density}
\end{figure}

\section{Conclusion}

A coupled quantum--classical model has been obtained for describing
the transport of a partially confined electron gas. In a subband
decomposition framework, the transport model is obtained by means of
diffusive approximation from adiabatic quantum-kinetic models. The
final system in the transport direction is obtained through two
steps. First, under the assumption of dominant elastic scattering, a
SHE system is derived (referring to \cite{nbafmnegulescu}).
Then, under dominant electron--electron collisions, an energy transport
model is given, obtaining diffusion coefficients well suited for numerical purposes
and with a relaxation term taking into account the electron-phonon interactions.
In particular, with
energy dependent cross--section of the collision operator explicit 
expression of the diffusion
coefficients and of the relaxation term is derived and used for the
numerical simulation of transport in a Double-Gate MOSFET.
In the limit of large electrons-phonons collisions, we recover
the model of \cite{ddspnum}.
We point out that a saturation of the current is observed without need
of resorting at mobility modeling as done in \cite{ddspnum}.

\bigskip
\noindent {\bf Acknowledgements.} The authors acknowledge partial
support from the Galil\'ee Project no 25992ND of the Hubert Curien program~:
{\it ``Mod\`eles num\'eriques du
transport collisionnel dans des dispositifs nano--\'electroniques''}.


%

\begin{thebibliography}{99}
\bs

\bibitem{ashmer} N.W. Ashcroft, N.D. Mermin, {\it Solid State Physics},
Brooks Cole Ed., 1976.

\bibitem{baccarani} G. Baccarani, S. Reggiani, {\it A compact double-gate
    MOSFET model comprising quantum-mechanical and nonstatic effects},
  IEEE Trans. Electron Devices {\bf 46} (8) (1999), 1656--1666.

\bibitem{nbadegond} N. Ben Abdallah, P. Degond, {\it On a hierarchy of
  macroscopic models for semiconductors}, J. Math. Phys. {\bf 37},
  n. 7 (1996), 3308--3333.

\bibitem{nbadegondgenieys} N. Ben Abdallah, P. Degond, S. Génieys,
  {\it An energy transport model for the semiconductors derived from
    the Boltzmann equation}, J. Stat. Phys., {\bf 84}, n. 1-2 (1996),
  205--231.

\bibitem{nbademasc} N. Ben Abdallah, P. Degond, P. Markowich, C.
  Schmeiser, {\it High field approximation of the spherical harmonic
    expansion model for semiconductors}, Z. Angew. Math. Phys. {\bf 52}
  (2001), 201--230.

\bibitem{nbaldsg} N. Ben Abdallah, L. Desvillettes, S. Génieys, {\it
    On the Convergence of the Boltzmann equation for semiconductors
    towards the Energy Transport model}, J. Stat. Phys. {\bf 98},
  n. 3-4 (2000), 835--870.

\bibitem{nbafmnegulescu} N. Ben Abdallah, F. Méhats, C. Negulescu,
  {\it Adiabatic quantum-fluid transport models},
  Comm. Math. Sci. {\bf 4} (2006), no. 3, 621-650.

\bibitem{ddsp} N. Ben Abdallah, F. Méhats, N. Vauchelet, {\it Diffusive transport of partially quantized particles : Existence, uniqueness and long time behaviour}, Proc. Edinb. Math. Soc.(2006) {\bf 49}, 513--549.

\bibitem{bourgade} J.-P. Bourgade, P. Degond, F. M\'ehats, C. Ringhofer, 
  {\it On quantum extensions to classical spherical harmonics expansion/Fokker-Planck models}, J. Math. Phys. (2006) {\bf 47} 4, 26 pp.


\bibitem{brezzifortin} F. Brezzi, M. Fortin, {\it Mixed and hybrid 
  finite element methods}, Springer Series in Computational Mathematics, 
  15. Springer-Verlag, New York, 1991.



\bibitem{expfitting} F. Brezzi, L. Marini, P. Pietra, {\it Two-dimensional
    exponential fitting and applications to drift-diffusion models}, SIAM
  J. Num. Anal. {\bf 26} (1989), 1342--1355.

\bibitem{caussignac} Ph. Caussignac, B. Zimmermann, R. Ferro, {\it
    Finite  element approximation of electrostatic potential in one
    dimensional multilayer structures with quantized electronic
    charge}, Computing 45, (1990) 251-264. 

\bibitem{chainais} C. Chainais-Hillairet, {\it Discrete duality finite 
  volume schemes for two-dimensional drift-diffusion and energy-transport 
  models}, Internat. J. Numer. Methods Fluids {\bf 59} (2009), no 3, 239--257.

\bibitem{chenetal} D. Chen, E. Kan, U. Ravaioli, C. Shu, R. Dutton,
  {\it An improved energy transport model including nonparabolicity and
    non-maxwellian distribution effects}, IEEE Electr. Dev. Letters {\bf 13}
  (1992), 26--28.

\bibitem{etnumeriq} P. Degond, A. Jüngel, P. Pietra, {\it Numerical
    discretization of energy-transport models for semiconductors with
    non-parabolic band structure},  SIAM J. Sci. Comput., {\bf 22}
  (2000), 986--1007.

\bibitem{QET} P. Degond, F. M\'ehats, C. Ringhofer, {\it Quantum energy-transport and drift-diffusion models}, 
 J. Stat. Phys. (2005) {\bf 118}(3-4), 625--665.

\bibitem{DeGroot} S. De Groot, P. Mazur, {\it Nonequilibrium thermodynamics},
  Dover publications, New York, 1984.

\bibitem{fournie} M. Fourni\'e, {\it Numerical discretization of the 
  energy-transport model for semiconductors}, Applied Mathematics
Letters (2002) {\bf 15}(6), 721--726.

\bibitem{gadau} S. Gadau, A. Jüngel, {\it A three-dimensional mixed 
  finite-element approximation of the semiconductor energy-transport equations},
  SIAM J. Sci. Comput. {\bf 31} (2008/09) no 2, 1120--1140.

\bibitem{gadau2} S. Gadau, A. Jüngel, P. Pietra, {\it A mixed finite-element
  scheme of a semiconductor energy-transport model using dual entropy
  variables}, Hyperbolic problems: theory, numerics and applications. I,
  139--146, Yokohama Publ., Yokohama, 2006.



\bibitem{gummel} H.K. Gummel, {\it A self-consistent iterative scheme
    for one-dimensional steady state transistor calculations}, IEEE
  Trans. on Elec Dev., 11 (10) 455, 1964.

\bibitem{holst} S. Holst, A. J\"ungel, P. Pietra, {\it A mixed 
  finite-element discretization of the energy-transport model for semiconductors},
  SIAM J. Sci. Comput. (2003) {\bf 24}(6), 2058--2075.

\bibitem{jerome} J.W. Jerome , C.W. Shu, {\it Energy transport systems for 
  semiconductors: analysis and simulation}, World Congress of
  Nonlinear Analysis, vol. I--IV. de Gruyter: Berlin, 1996, 3835--3846.

\bibitem{jungel} A. J\"ungel, {\it Transport Equations for Semiconductors},
  Lectures Notes in Physics no 773, Springer, Berlin, 2009.

\bibitem{jungel2} A. J\"ungel, D. Matthes, {\it A derivation of the isothermal quantum hydrodynamic equations using entropy minimization}, 
  ZAMM Z. Angew. Math. Mech. (2005) {\bf 85} (11), 806--814.

\bibitem{Kreuzer} H. Kreuzer, {\it Nonequilibrium thermodynamics and its statistical fundation}, Clarondon Press, Oxford, 1981.

\bibitem{Lab} C. Lab, Ph. Caussignac, {\it An energy-transport model for 
  semiconductor heterostrucures devices: Application to AlGaAs/GaAs Modfets}, 
  COMPEL {\bf 18} (1999), 61--76.

\bibitem{lyumkis} E. Lyumkis, B. Polsky, A. Shur, P. Visocky, {\it Transient
    semiconductor device simulation including energy balance equation},
  COMPEL {\bf 11} (1992), 311--325.

\bibitem{marrocco} A. Marrocco, Ph. Montarnal, {\it Simulation de modèles "energy
transport'' à l'aide des éléments finis mixtes.} (French) [Simulation of
energy transport models via mixed finite elements], C. R. Acad. Sci. Paris
Sér. I Math. {\bf 323} (1996).


\bibitem{claudia} C. Negulescu, N. Ben Abdallah, E. Polizzi, M. Mouis,
  {\it Simulation schemes in 2D nanoscale MOSFETs: a WKB based method},
  J. Comput. Electron. {\bf 3}, no. 3--4, 397--400 (2005).

\bibitem{ddspnum} P. Pietra, N. Vauchelet, {\it Modeling and
    simulation of the diffusive transport in a nanoscale Double-Gate
    MOSFET}, J. Comput. Electron. (2008) {\bf 7}, 52--65.

\bibitem{polizzi} E. Polizzi, N. Ben Abdallah, {\it Subband
    decomposition approach for the simulation of quantum electron
    transport in nanostructures}, J. Comp. Phys. {\bf 202} (2005), 150--180.

\bibitem{reggiani} L. Reggiani (ed), {\it Hot electon transport in
    semiconductors}, Springer, Berlin, 1985.

\bibitem{ringho} C. Ringhofer, {\it An entropy-based finite difference 
  method for the energy transport system}, Math. Models Methods Appl. Sci.,{\bf 11} (2001), 769--796.







\end{thebibliography}
\end{document}